\theoremstyle{plain}
\theoremstyle{remark}
\renewcommand{\cite}{\citet*}
\newcommand{\bX}{\mathbf{X}}
\newcommand{\bx}{\mathbf{x}}
\newcommand{\bu}{\mathbf{u}}
\newcommand{\bY}{\mathbf{Y}}
\newcommand{\by}{\mathbf{y}}
\newcommand{\bF}{\mathbf{F}}
\newcommand{\bD}{\mathbf{D}}
\newcommand{\bG}{\mathbf{G}}
\newcommand{\tbG}{\tilde\bG}
\newcommand{\be}{\mathbf{e}}
\newcommand{\br}{\mathbf{r}}
\newcommand{\bmu}{\mathbf{\mu}}
\newcommand{\mb}{\mathbf}
\newcommand{\cov}{\mathop{\mathrm{Cov}}}
\newcommand{\bGa}{\mb{\Gamma}}
\newcommand{\bSi}{\mb{\Sigma}}
\newcommand{\bA}{{\bf A}}
\newcommand{\bB}{{\bf B}}
\newcommand{\bw}{{\bf w}}
\newcommand{\bU}{{\bf U}}
\newcommand{\bV}{{\bf V}}
\newcommand{\bC}{{\bf C}}
\newcommand{\bS}{{\bf S}}
\newcommand{\rtr}{{\rm tr}}
\newcommand{\bI}{{\bf I}}
\renewcommand{\(}{\left(}
\renewcommand{\)}{\right)}
\DeclareMathOperator{\tr}{tr}         
        \DeclareMathOperator{\E}{E}
\newtheorem{thm}{Theorem}[section]
\newtheorem{lem}{Lemma}[section]
\newtheorem{rem}{Remark}[section]
\begin{document}

\begin{frontmatter}
\title{Limiting behavior of bilinear forms for the resolvent of sample covariance matrices under elliptical distribution with applications}
\runtitle{Bilinear forms for the resolvent of sample covariance matrices}

\begin{aug}

\author[A]{\fnms{Yanqing} \snm{ Yin}
			\ead[label=e3]{yinyq@cqu.edu.cn}}			
			\and
\author[B]{\fnms{Wang} \snm{ Zhou}
			\ead[label=e4]{wangzhou@nus.edu.sg}}
			
\runauthor{Yin and Zhou.}

\address[A]{School of Mathematics and Statistics,
			Chongqing University\printead[presep={,\ }]{e3}}

\address[B]{Department of Statistics and Data Science,  National University of Singapore \printead[presep={,\ }]{e4}}
\end{aug}

\begin{abstract}
In this paper, we introduce a joint central limit theorem (CLT) for specific bilinear forms, encompassing the resolvent of the sample covariance matrix under an elliptical distribution. Through an exhaustive exploration of our theoretical findings, we unveil a phase transition in the limiting parameters that relies on the moments of the random radius in our derived CLT. Subsequently, we employ the established CLT to address two statistical challenges under elliptical distribution. The first task involves deriving the CLT for eigenvector statistics of the sample covariance matrix. The second task aims to ascertain the limiting properties of the spiked sample eigenvalues under a general spiked model. As a byproduct, we discover that the eigenmatrix of the sample covariance matrix under a light-tailed elliptical distribution satisfies the necessary conditions for asymptotic Haar, thereby extending the Haar conjecture to broader distributions.
\end{abstract}

\begin{keyword}[class=MSC]
\kwd[Primary ]{62H15}
\kwd{62B20}
\kwd[; secondary ]{62D10}
\end{keyword}

\begin{keyword}
\kwd{high dimension
  covariance matrix; central limit theorem; Haar conjecture; elliptical distribution; spiked model}
\end{keyword}

\end{frontmatter}

\section{Introduction and motivation}

The covariance matrix assumes a central role in multivariate statistical analysis, and numerous statistical inferences hinge on the spectral properties of the population covariance matrix (PCM). In high-dimensional scenarios, the sample covariance matrix (SCM) ceases to be a reliable estimator for the PCM in a spectral sense. Nevertheless, considerable efforts have been devoted to exploring the relationship between them. This investigation is crucial as it aids in making statistical inferences based on the observed data.Starting from the pioneering work of \cite{Wishart28G}, which examined the properties of the eigenspace of Wishart matrices, researchers have devoted significant efforts to enhancing the generality of data models to better align with real-world applications. In the past decades, the most extensively investigated data model is undoubtedly the independent component structure (ICS). We refer the readers to \cite{BaiS10S, KnowlesY17A, BloemendalE14I, Bao2022} and references therein. This model, serving as a natural extension of the multivariate Gaussian, assumes that a high-dimensional random population is a linear transformation of a random vector with independent and identically distributed (i.i.d.) entries. However, it has been recognized that this model excludes some significant distribution families, such as the elliptical family. 

We define a random vector $\mathbf{y}$ to follow an {\bf elliptically correlated structure} (ECS) if and only if it has a stochastic representation given by:
\begin{align*}
  \by=\rho \bGa \bu+\bmu.
\end{align*}
Here, the matrix $\mathbf{\Gamma} \in \mathbb{R}^{p \times p}$ and vector $\mathbf{\mu} \in \mathbb{R}^{p}$ are non-random, with ${\rm rank}(\mathbf{\Gamma}) = p$. The scalar variable $\rho \geq 0$ represents the radius of $\mathbf{y}$, and $\mathbf{u} \in \mathbb{R}^p$ is the random direction. The random direction $\mathbf{u}$ is independent of $\rho$ and uniformly distributed on the unit sphere $S^{p-1}$ in $\mathbb{R}^p$, denoted by $\mathbf{u} \sim U(S^{p-1})$ in the subsequent discussion.
This data model naturally extends the concept of multivariate normal distributions, offering a distinct orientation compared to the independent component structure (ICS) model. It adeptly captures the dependence structure among multiple variables, providing a versatile framework for analyzing complex multivariate data. Specifically, when $\rho^2 \sim \chi^2(p)$, the resulting distribution aligns with the multivariate Gaussian distribution. Elliptical distributions are well-suited for modeling the inherent structure of real-world datasets across diverse fields such as finance, biology, and engineering. Among the commonly employed distributions within this category are the multivariate Student's t-distribution, multivariate Cauchy distribution, and elliptical Gamma distribution.

In recent years, there has been a concerted effort to explore the spectral properties of the sample covariance matrix under elliptical distribution using random matrix theory (RMT). For further insights, interested readers are encouraged to explore works such as \cite{Karoui09C, Hu2019, Hu2019a, LiYin2023TAMS}. Nevertheless, numerous spectral properties crucial for statistical inferences remain unexplored. In light of this, our efforts are directed towards addressing this gap. More specifically, we aim to establish a joint CLT for several bilinear forms involving the resolvent of the sample covariance matrix under ECS. Following the establishment of the newly developed CLT, we employ it to systematically investigate two aspects. Firstly, we delve into the asymptotic properties of the eigenvectors of the sample covariance matrix. Secondly, we turn our attention to the spiked sample eigenvalues within the framework of a comprehensive spiked model. 

The primary contributions of our work are outlined as follows.
\begin{itemize}
	\item [1.] We derived a novel joint CLT for bilinear forms that incorporate the resolvent of the sample covariance matrix under the ECS scenario. This accomplishment allows for the exploration of the interdependence structure among the entries of the resolvent matrix and sheds light on the interdependence structure among their linear combinations. 
	\item [2.] Through the application of the newly established CLT, we have derived a corresponding CLT for eigenvector statistics of the sample covariance matrix under ECS model. Diverging from linear eigenvalue statistics, we have uncovered a noteworthy phase transition regarding the dependence of the CLT on the fourth moment of the radius variable $\rho$. Specifically, when the asymptotic variance of the squared radius $\rho^2$ vanishes, the CLT becomes independent of the specific underlying distribution. Conversely, the presence of non-vanishing asymptotic variance leads to a discernible dependence of the CLT on the specific underlying distribution. This phenomenon illuminates the extension of the Haar conjecture regarding sample covariance matrices to encompass all light-tailed elliptical distributions.
	\item [3.] We establish a connection between bilinear forms and random matrices that govern the asymptotic behaviors of spiked sample eigenvalues in a general spiked model. Consequently, we achieve the CLT for spiked sample eigenvalues under ECS model. Again, in the case of a light-tailed elliptical distribution, the behavior of spiked sample eigenvalues mirrors that observed under the Gaussian case. This observation underscores the universality phenomenon inherent in applying Principal Component Analysis (PCA) across all light-tailed elliptical distributions.
\end{itemize}

The subsequent sections of this paper are structured as follows. In the upcoming section, we will provide essential background results to facilitate a comprehensive understanding of our findings. Following that, we will present our main results. Moving on to the third section, we delve into the  application of our CLT within the context of two specific statistical problems as mentioned before. The detailed proofs will be deferred to the appendix, following a concluding discussion.

Throughout this paper, we represent the spectral norm of a matrix by $|\cdot|$. The symbol $C$ denotes a constant that may assume different values depending on the context. For any real sequences $a_n$ and $b_n$, we use $a_n=o(b_n)$ to express the relationship $a_n/b_n\to 0$ as $b_n\to \infty$ and $a_n=O(b_n)$ to indicate that $a_n/b_n\leq C$ as $b_n\to \infty$. Throughout this paper, $\be_j$ represnts the $j$-th column of the identity matrix.

\section{Prior definitions and main results}
This section aims to present our primary theoretical results. We initiate this discussion by introducing relevant definitions and outlining our model assumptions. 
\subsection{Definitions and model assumptions.}
Let $\bA_n$ represent a $p \times p$ symmetric matrix with eigenvalues {$\lambda_1\le\ldots\le\lambda_p$}. The \emph{\bf Empirical Spectral Distribution} (ESD) of $\mb A_n$ is defined as
 $
   F^{\bA_n}(x)=\frac{1}{p}\sum_{j=1}^p  I(\lambda_j\leq x),
$
where $I(\cdot)$ is the indicator function.
If, in the limit as $p$ and $n$ approach infinity, the limit of $F^{\bA_n}(x)$ exists, it is termed the \emph{\bf Limit Spectral Distribution} (LSD). A crucial tool in RMT for exploring the spectral properties of $\bA_n$ is the \emph{\bf Stieltjes transform}, denoted by
	$$m_{F^{\bA_n}}(z)=\int\frac{1}{y-z}d{F^{\bA_n}}(y),~~\mbox{ where }
	z\in\mathbb{C}^{+}\equiv\{z=u+iv\in\mathbb{C}:v>0\}.$$
It is evident that the Stieltjes transform is linked to the \emph{\bf resolvent} $$\Upsilon\(\bA_n,z\)\doteq\(\bA_n-z\)^{-1},$$
as expressed by  $m_{F^{\bA_n}}(z)=\frac1p\tr\Upsilon\(\bA_n,z\).$

The primary model of interest in this paper is the sample covariance matrix under elliptical distributions, expressed as $$\bS_{0,n}=\frac{1}n\bGa_n\bX_n\bX_n^T\bGa_n^T.$$  We proceed to enumerate the assumptions as follows.\begin{description}
	\item \emph{Assumption (a) [ECS]}: The columns of $\bX_n$ follow an elliptical distribution, represented as $\bx_j=\rho_j{\bf u_j}$, where $1\leq j\leq n$. Here, the random radius $\rho_j$'s are independent and identically distributed (i.i.d.) random variables with $\E(\rho_1^2)=p$ and  $\E\(\rho_1^4\)=m_p=\nu_p+p^2\geq p^2$ and the directions ${\bf u}_j\overset{i.i.d.}{\sim} U(S^{p-1})$; 
	\item \emph{Assumption (b) [Bounded norms]}: $\bGa_{n}$ is a $p\times p$ non-random matrix with a uniformly bounded spectral norm. As $n\to\infty$, the ESD of $\boldsymbol\Sigma_n=\bGa_{n}\bGa_{n}^T$ denoted by $H_{1n}$ converges weakly to a proper distribution $H_{1}$. Furthermore, the distribution function $H_{2n}$ of $m_p^{-1/2}{\rho_1^2}$ converges to $H_{2}$ whose support is bounded. 
	\item \emph{Assumption (c) [High dimensional framework]}: The dimension to sample size ratio $c_n=p/n\to c\in(0,\infty)$ as $n\to\infty$.
\end{description}

The ECS model under Assumption (a) encompasses a broad range of elliptical distributions. It is notable that the variance of $\rho^2$, denoted as $\nu_p$, can assume any order of $p$. However, it is conceivable that when $\nu_p/p^2\to\infty$, the spectral properties of $\bS_{0,n}$ will be primarily determined by the distribution of $\rho$. Thus, it is reasonable to consider a normalization and shift focus to the normalized sample covariance matrix 
$$\bS_{n}=\frac{\sqrt{{p^2}/m_p}}n\bGa_n\bX_n\bX_n^T\bGa_n^T=\sqrt{{p^2}/m_p}\bS_{0,n}.$$
Leveraging this normalization, we can explore a more general model than in \cite{Hu2019, Hu2019a, LiYin2023TAMS}, where the LSD and CLT for linear spectral distributions (LSS) are considered and than in \cite{Wentwell2022} where the distribution of largest eigenvalue are considered. Evidently, our model encompasses their models as special cases when $\nu_p=O(p)$.
Assumptions (b) and (c) are commonly employed in RMT.

\subsection{First order limit : the LSD}\label{secct}
As a foundational step, we first introduce the results related to the LSD of $\bS_n$. 
\begin{thm}
	Suppose the Assumptions $(a)-(c)$ hold. With probability $1$, as $n\to\infty$, the ESD of $\bS_n$ converges weakly to a non-random probability distribution function $F^{c,H_{1},H_{2}}$. To be specific, we have 
	\begin{description}
		\item [I. Trivial scenarios]: If $H_{1}=1_{[0,\infty)}$ or $H_{2}=1_{[0,\infty)}$, then $F^{c,H_{1},H_{2}}=1_{[0,\infty)};$
		\item [II. Non trivial scenarios]: If $H_{1}\neq 1_{[0,\infty)}$ and $H_{2}\neq 1_{[0,\infty)}$, for each $z\in \mathbb{C}^+$,
		\begin{align}\label{mpf}
			\begin{cases}
				m(z)=-z^{-1}(1-c^{-1})-z^{-1}c^{-1}\int\frac1{1+q_1(z)y}dH_{2}(y)\\
				m(z)=-z^{-1}\int\frac1{1+q_2(z)x}dH_{1}(x)\\
				m(z)=-z^{-1}-c^{-1}q_1(z)q_2(z)
			\end{cases}
		\end{align}
		is viewed as a system of equations for the complex vector $\left(m(z),q_1(z),q_2(z)\right)$, then (\ref{mpf}) has a unique solution in the set
		$$U=\left\{\left(m(z),q_1(z),q_2(z)\right):\Im m(z)>0,\Im(zq_1(z))>0,\Im q_2(z)>0 \right\}.$$ Also, the stieltjes transform of $F^{c,H_{1},H_{2}}$, denoted by $m_F(z)$, together with the other two functions $g_1(z)$ and $g_2(z)$, both of which are analytic on $\mathbb{C}^+$, are given by this solution.
	\end{description}
\end{thm}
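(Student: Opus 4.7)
The trivial cases follow from a standard truncation-plus-rank-inequality argument: when $H_1 = 1_{[0,\infty)}$ or $H_2 = 1_{[0,\infty)}$, either the eigenvalues of $\bSi_n$ concentrate at $0$ or the rescaled radii $\tau_j := \rho_j^2/\sqrt{m_p}$ collectively vanish in probability, and after truncating the radii at a large threshold and discarding a rank-$o(p)$ high-spectrum piece of $\bSi_n$ one obtains $\|\bS_n\| = o(1)$ up to a rank-$o(p)$ perturbation, whence $F^{\bS_n} \Rightarrow 1_{[0,\infty)}$ a.s. For the non-trivial case I will follow Silverstein's Stieltjes-transform / Sherman--Morrison scheme after reducing $\bS_n$ to a standard separable sample covariance matrix.

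Two preliminary reductions are crucial. First, truncate $\rho_j$ at level $M m_p^{1/4}$ for large fixed $M$: Markov's inequality gives $P(\tau_j > M^2) \le M^{-4}$ via $\E\tau_j^2 = 1$, and the rank inequality $\|F^{\bS_n} - F^{\bS_n^{\mathrm{tr}}}\|_\infty \le c_n M^{-4}$ makes the truncation harmless once $M \to \infty$ is taken after $n \to \infty$. Second, writing $\bu_j = \bw_j/\|\bw_j\|$ with $\bw_j \sim N(0,\bI_p)$ independent of $\rho_j$, the sharp concentration $\|\bw_j\|^2/p = 1 + O_p(p^{-1/2})$ together with $\|(\bS_n - z\bI)^{-1}\| \le 1/\Im z$ shows that $\bS_n = (p/n)\sum_j \tau_j \bGa_n \bu_j \bu_j^T \bGa_n^T$ has the same Stieltjes-transform limit as the separable sample covariance
\[
\tilde\bS_n = \frac{1}{n}\bGa_n\bW_n\bT_\tau\bW_n^T\bGa_n^T, \qquad \bT_\tau = \diag(\tau_1,\ldots,\tau_n).
\]

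For $\tilde\bS_n$, set $m_n(z) = \frac{1}{p}\tr(\tilde\bS_n - z\bI)^{-1}$ and $\tilde m_n(z) = \frac{1}{p}\tr\bigl(\bSi_n(\tilde\bS_n - z\bI)^{-1}\bigr)$. The identity $-z(\tilde\bS_n - z\bI)^{-1} = \bI - \tilde\bS_n(\tilde\bS_n - z\bI)^{-1}$ combined with Sherman--Morrison on the $j$-th column and Gaussian quadratic-form concentration
\[
\tfrac{\tau_j}{n}\bw_j^T\bGa_n^T(\tilde\bS_{n,(j)} - z\bI)^{-1}\bGa_n\bw_j = c_n \tau_j\,\tilde m_n(z) + o_p(1)
\]
produces, with $q_1(z) := c\,\tilde m(z)$, the first equation of \eqref{mpf} in the limit. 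The second and third equations come from a deterministic equivalent $(\tilde\bS_n - z\bI)^{-1} \approx -z^{-1}(\bI + q_2(z)\bSi_n)^{-1}$ with $q_2$ to be identified: taking normalized trace yields the second equation; evaluating $\frac{1}{p}\tr(\bSi_n\,\cdot\,)$ on both sides gives $zq_2\tilde m = -(1+zm)$, i.e.\ the third equation after substituting $q_1 = c\tilde m$. A column-wise martingale decomposition plus Vitali's theorem on a countable dense subset of $\mathbb{C}^+$ upgrades convergence in probability to a.s. weak convergence of $F^{\bS_n}$. Uniqueness of $(m,q_1,q_2)$ in $U$ follows from the standard Stieltjes-transform contraction argument: the difference of two candidate solutions is bounded by a multiple strictly less than one of itself using $\Im m > 0$, $\Im(zq_1) > 0$, $\Im q_2 > 0$ when $\Im z$ is large, and analytic continuation extends uniqueness to all of $\mathbb{C}^+$.

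\textbf{Main difficulty.} The most delicate step is the interaction of the two reductions: the sphere-to-Gaussian transfer injects the cross-term $(\|\bw_j\|^2/p - 1)\tau_j$ inside every quadratic form, and because $\tau_j$ need not be tight when $\nu_p/p^2 \to \infty$, a naive second-moment bound is insufficient. Controlling this requires an interlocked truncation of $\rho_j$ and of $\|\bw_j\|^{-2}$ together with quantitative resolvent estimates; once this concentration package is in place the derivation of \eqref{mpf} and its uniqueness in $U$ follow along standard Silverstein--Bai lines.
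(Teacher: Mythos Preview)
The paper does not actually prove this theorem: immediately after stating it, the authors write that it ``represents a minor extension of Theorem 1 in \cite{Hu2019a}, and as such, we omit its proof.'' So there is no in-paper argument to compare against; the reference point is the standard Stieltjes-transform derivation for separable sample covariance matrices in the cited work.

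Your proposal is precisely that standard route---truncate the radii, replace $\bu_j$ by $\bw_j/\|\bw_j\|$ with Gaussian $\bw_j$, reduce to a separable model $\tilde\bS_n=\frac1n\bGa_n\bW_n\bT_\tau\bW_n^T\bGa_n^T$, run the Sherman--Morrison / quadratic-form-concentration machinery to derive the three coupled equations, and close with a martingale/Vitali upgrade and a contraction-based uniqueness argument. This is correct and matches the approach of the cited reference, so your plan is fine.

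One small comment: the ``main difficulty'' you flag is overstated. Assumption~(b) already guarantees that the law $H_{2n}$ of $\tau_j=\rho_j^2/\sqrt{m_p}$ converges weakly to $H_2$ with \emph{bounded} support, so the family $\{\tau_j\}$ is automatically tight regardless of whether $\nu_p/p^2\to\infty$. The cross-term $(\|\bw_j\|^2/p-1)\tau_j$ is therefore controlled by the Gaussian concentration $\|\bw_j\|^2/p-1=O_p(p^{-1/2})$ together with a fixed-level truncation of $\tau_j$; no ``interlocked'' truncation of $\|\bw_j\|^{-2}$ is needed. The rest of your outline goes through without change.
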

This theorem represents a minor extension of Theorem 1 in \cite{Hu2019a}, and as such, we omit its proof.
In general, the covariance matrix under an elliptical distribution typically belongs to the domain of the so-called separable sample covariance matrix model. For light-tailed elliptical distributions where $\nu_p=O(p),$ it is observed that the distribution of $\rho^2/p$ becomes degenerate, leading to $H_2=1_{[1,\infty)}$. This degeneracy results in the system of equations reducing to the single M-P equation.

For future reference, we introduce here the {\bf companion} of $\bS_n$, defined as $\underline\bS_n=\frac{\sqrt{{p^2}/m_p}}n\bX_n^T\bGa_n^T\bGa_n\bX_n.$ Note that the spectra of $\bS_n$ and $\underline\bS_n$ only
differ by $|p-n|$ zero eigenvalues.  It follows that
\begin{align*}
  F^{\underline\bS_n}(x)=(1-c_n)I_{[0,\infty)}+c_nF^{\bS_n}(x),
\end{align*}
from which we get
\begin{align}\label{sx}
  \quad \underline F(x)=(1-c)I_{[0,\infty)}+c F(x), \ m_{F^{\underline\bS_n}}(z)=-\frac{1-c_n}z+c_nm_{F^{\bS_n}}(z),\ z\in \mathbb{C}^+,
\end{align}
and as $n\to\infty$
\begin{align}\label{sxl}
  \underline m(z):=\underline m_{F^{c,H_1,H_2}}(z)=-\frac{1-c}z+cm(z), \ z\in \mathbb{C}^+.
\end{align}
Therefore, by comparing (\ref{mpf}) and (\ref{sxl}), we can establish the following relationship
\begin{align}\label{g12}
	\begin{cases}
		zg_1(z)=-c\int\frac x{1+g_2(z)x}dH_1(x),\\
		zg_2(z)=-\int\frac y{1+g_1(z)y}dH_2(y).
	\end{cases}
\end{align}

\subsection{Bilinear forms for resolvent} \label{secthm}
After the discussion presented in the last subsection, the LSD of $\bS_n$ is clarified under specific scenarios. Regarding the second-order properties of the SCM under ECS, previous research has made significant efforts in this area in recent years. In the case where $\nu_p=O(p)$, the CLT for LSS was considered in \cite{Hu2019} and \cite{LiYin2023TAMS}. Their CLT reveals the dependence of the limiting mean and variance on both $H_1$ and the variance of $\rho^2$ ($\nu_p$). This dependence, distinct from the results in the ICS case, indicates the influence of the nonlinear dependence of variables in the multivariate population. In \cite{Hu2019a}, the authors establish the CLT for LSS under the case where $\nu_p=O(p^2)$, demonstrating a different convergence rate of $\sqrt{n}$ in the corresponding CLT. 

At the heart of the spectral decomposition of SCM lies the eigenmatrix, an ensemble of eigenvectors that unveils intricate patterns within the data. Understanding its behavior under the influence of high dimensionality, varying correlation structures, and non-normality is essential for developing robust methodologies tailored to modern data challenges. To the best of our knowledge, no prior research has specifically explored the properties of eigenvectors in the context of the SCM and the spiked model under ECS. Our study endeavors to bridge these gaps in the existing literature.
We will achieve this objective by establishing a joint CLT for processes of bilinear forms related to the resolvent $\Upsilon\(\bA_n,z\).$ More specifically, we consider $r$ correlated bilinear forms: $$\Bigg\{\mathcal{B}_{\mathfrak{r}}\(\Upsilon\(\bA_n,z\)\)=\mb\pi_{n,2\mathfrak{r}-1}^T\Upsilon\(\bA_n,z\)\mb\pi_{n,2\mathfrak{r}}\Bigg\}_{\mathfrak{r}=1}^r,$$
where the $p$-dimensional non-random vectors $\mb\pi_{n1},\cdots,\mb\pi_{n{2\mathfrak{r}}}$ are assumed to have unit norms without loss of generality. Our focus is on the multivariate process equipped with $z.$ We note that in this paper, our consideration is limited to $z$ values in a proper region denoted as $\mathcal{Z}$, ensuring that both the resolvent and $\(\bI_p+g_{2n}^0(z)\bSi_n\)^{-1}$ exist and are bounded in spectral norm with high probability for the given scenario. 

 The following theorem establishes the convergence of a single bilinear form $\mathcal{B}_{\mathfrak{r}}\(\Upsilon\(\bA_n,z\)\).$
\begin{thm}\label{thmlsd}
Under Assumptions (a-c), the following conclusion holds for any $z\in\mathcal{Z}$,
$$\mathcal{B}_{\mathfrak{r}}\(\Upsilon\(\bA_n,z\)\)+z^{-1}\mb\pi_{n,2\mathfrak{r}-1}^T
\(\bI_p+g_{2n}^0(z)\bSi_n\)^{-1}\mb\pi_{n,2\mathfrak{r}}\to0, \quad a.s., \quad\mathfrak{r}=1,\cdots,r.$$
 Here 
$\(\underline{m}_{F^{c_n,
		H_{n1},H_{n2}}}(z),g_{1n}^0(z),g_{2n}^0(z)\)$ is defined in (\ref{mpf}) by
replacing $c,H_1,H_2$ with\\ $c_n,H_{1n},H_{2n}$.
\end{thm}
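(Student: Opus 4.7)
The plan is to establish the claim as a deterministic-equivalent statement for the resolvent. Writing $\bM_n(z):=-z^{-1}(\bI_p+g_{2n}^0(z)\bSi_n)^{-1}$, the goal reduces to proving that $\mb\pi_{n,2\mathfrak{r}-1}^T[\Upsilon(\bS_n,z)-\bM_n(z)]\mb\pi_{n,2\mathfrak{r}}\to 0$ almost surely. First I would apply the resolvent identity $\bA^{-1}-\bM_n=\bA^{-1}(\bM_n^{-1}-\bA)\bM_n$ with $\bA=\bS_n-z\bI_p$ to obtain
$$\Upsilon(\bS_n,z)-\bM_n(z)=-\Upsilon(\bS_n,z)\bigl(\bS_n+zg_{2n}^0(z)\bSi_n\bigr)\bM_n(z),$$
and then substitute the rank-one decomposition $\bS_n=(\alpha_n/n)\sum_{j=1}^n\by_j\by_j^T$, where $\by_j:=\rho_j\bGa_n\bu_j$ and $\alpha_n:=\sqrt{p^2/m_p}$. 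For each $j$, the Sherman--Morrison identity $\Upsilon(\bS_n,z)\by_j=\beta_j\Upsilon(\bS_n^{(j)},z)\by_j$, with $\beta_j^{-1}=1+(\alpha_n/n)\by_j^T\Upsilon(\bS_n^{(j)},z)\by_j$ and $\bS_n^{(j)}=\bS_n-(\alpha_n/n)\by_j\by_j^T$, isolates the contribution of the $j$-th column from a resolvent independent of it.

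Next I would apply quadratic-form concentration on the sphere: since $\bu_j\sim U(S^{p-1})$, the bilinear form $\bu_j^T\bC\bu_j$ concentrates around $\tr(\bC)/p$ for any matrix $\bC$ independent of $\bu_j$, with stochastic deviations of order $\|\bC\|/\sqrt{p}$. Applied with $\bC=\bGa_n^T\Upsilon(\bS_n^{(j)},z)\bGa_n$, this yields $\by_j^T\Upsilon(\bS_n^{(j)},z)\by_j\approx(\rho_j^2/p)\tr[\bSi_n\Upsilon(\bS_n^{(j)},z)]$, and a rank-one perturbation bound lets me replace $\bS_n^{(j)}$ by $\bS_n$. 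After summing over $j$, the $\bS_n$-part of the target bilinear form becomes a weighted average involving the traces $\tr[\bSi_n\Upsilon(\bS_n,z)]$, the scaling $\alpha_n\rho_j^2/p$, and the Sherman--Morrison denominators; averaging the $\rho_j^2$-dependence against the law $H_{2n}$ and using $\E[\rho_j^2]=p$ produces the deterministic quantity $g_{2n}^0(z)$ dictated by (\ref{mpf}) and (\ref{g12}). Consequently the $\bS_n$-contribution cancels against the $zg_{2n}^0(z)\bSi_n$ term, leaving a vanishing remainder.

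To promote convergence in probability to almost sure convergence, I would package the residuals as a martingale-difference sequence with respect to $\mathcal{F}_j=\sigma\{(\rho_k,\bu_k):k\leq j\}$ and apply Burkholder's inequality together with Borel--Cantelli. A preliminary truncation of $\rho_j$ at a slowly growing level is needed so that the moment conditions encoded by $H_{2n}\Rightarrow H_2$ become usable; the uniform boundedness of $\|\Upsilon(\bS_n,z)\|$ and $\|(\bI_p+g_{2n}^0(z)\bSi_n)^{-1}\|$ for $z\in\mathcal{Z}$ ensures that the truncation cost is negligible.

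The principal obstacle is handling the interaction between the radial variable $\rho_j$ and the spherical direction $\bu_j$ inside the Sherman--Morrison correction $\beta_j$. Unlike in the ICS setting, $\beta_j$ retains a genuine dependence on the distribution of $\rho_j^2$ through $H_{2n}$, and this dependence does not reduce to a single scalar; instead it survives through the coupled pair of equations in (\ref{mpf}). Verifying that the averaged $\rho_j^2$-dependence is absorbed precisely into $g_{2n}^0(z)$ under both scaling regimes $\nu_p=O(p)$ and $\nu_p=O(p^2)$, while simultaneously keeping all auxiliary resolvents uniformly bounded on $\mathcal{Z}$, is the most delicate part of the argument.
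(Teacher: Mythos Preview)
Your proposal is correct and takes essentially the same approach as the paper: resolvent identity against the deterministic equivalent $-z^{-1}(\bI_p+g_{2n}^0(z)\bSi_n)^{-1}$, Sherman--Morrison/leave-one-out expansion, quadratic-form concentration for $\bu_j$ on the sphere, and a martingale/Burkholder/Borel--Cantelli argument for almost-sure convergence. The paper organizes this by first splitting into a centered random part and its expectation and treating them separately; one small simplification worth noting is that the truncation of $\rho_j$ you anticipate is unnecessary, since Assumption~(b) gives $\xi_k^2=m_p^{-1/2}\rho_k^2$ bounded support, so all moments of $\xi_k$ are uniformly bounded and the ``obstacle'' you flag is resolved simply by replacing $\beta_j$ with $\psi_j(z)=(1+\xi_j^2 g_{1n}^0(z))^{-1}$ and using $\E[\xi_1^2\psi_1(z)]=-zg_{2n}^0(z)$.
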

We are now ready to establish the convergence of the multivariate process $$\(\mathcal{B}_{1}\(\Upsilon\(\bA_n,z\)\),\cdots,\mathcal{B}_{r}\(\Upsilon\(\bA_n,z\)\)\)^T.$$
We will present the case $r=2$ and the general cases are similar and therefore omitted.
Write
\begin{align*}
	M_{n}(z)=\begin{pmatrix}
		M_{n1}(z)\\
		M_{n2}(z)
	\end{pmatrix}=\begin{pmatrix}
		\sqrt p\left(\mathcal{B}_{1}\(\Upsilon\(\bA_n,z\)\)+z^{-1}\pi_{n1}^T
		\(\bI_p+g_{2n}^0(z)\bSi_n\)^{-1}\pi_{n2}
		\right)\\
		\sqrt p\left(\mathcal{B}_{2}\(\Upsilon\(\bA_n,z\)\)+z^{-1}\pi_{n3}^T
		\(\bI_p+g_{2n}^0(z)\bSi_n\)^{-1}\pi_{n4}\right)
	\end{pmatrix}.
\end{align*}
\begin{thm}\label{thclt}
Under Assumptions (a-c), define
\begin{align*}
	&r_{jk}(z_1,z_2)=\lim_{n\to+\infty}\pi_{nj}^T\(\bI_p+g_{2n}^0(z_1)\bSi_n\)^{-1}\bSi_n\(\bI_p+g_{2n}^0(z_2)\bSi_n\)^{-1}\pi_{nk},\\
	& r_{jk}(z)=\lim_{n\to+\infty}\pi_{nj}^T\(\bI_p+g_{2n}^0(z)\bSi_n\)^{-2}\bSi_n\pi_{nk},\quad j,k \in \{1,2,3,4\}.
\end{align*}
We have the two dimensional process $M_n(z)$ for $z\in\mathcal{Z}$
  converges weakly to a two dimensional zero-mean Gaussian process $M(z)$ with a covariance function  
\begin{align*}
	\cov\(M_1(z_1),M_2(z_2)\)=&h_{1}(z_1,z_2)r_{14}(z_1,z_2)r_{23}(z_1,z_2)+h_{1}(z_1,z_2)r_{13}(z_1,z_2)r_{24}(z_1,z_2)\\
&+{h_{2}(z_1,z_2)}r_{12}(z_1)r_{34}(z_2).
\end{align*} 
Here  

\begin{align*}
	h_1(z_1,z_2)=&\frac {c\(z_1g_2(z_1)-z_2g_2(z_2)\)}{z_1^2z_2^2\(g_1(z_1)-g_1(z_2)\)\(1-d(z_1,z_2)\)},\\
	h_2(z_1,z_2)=&\frac{cg_2'(z_1)g_2'(z_2)\(\underline m(z_1)g_2(z_2)-\underline m(z_2)g_2(z_1)\)}{g_2(z_1)g_2(z_2)\(g_1(z_1)-g_1(z_2)\)},\\
 d(z_1,z_2)=	&\frac1{z_1z_2}\frac{z_1g_1(z_1)-z_2g_1(z_2)}{g_1(z_1)-g_1(z_2)}\frac{z_1g_2(z_1)-z_2g_2(z_2)}{g_2(z_1)-g_2(z_2)}.
\end{align*}
\end{thm}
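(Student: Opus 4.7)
The plan is to follow the classical martingale decomposition strategy of Bai and Silverstein, adapted to the elliptical setting. Let $\mathcal{F}_j = \sigma(\bx_1, \ldots, \bx_j)$ and $\E_j = \E[\,\cdot \mid \mathcal{F}_j]$, with $\E_0 = \E$. Writing $Y_{n\ell}(z) = \sqrt{p}\,\mathcal{B}_{\ell}(\Upsilon(\bS_n,z))$ for $\ell = 1,2$, I decompose
\begin{align*}
Y_{n\ell}(z) - \E Y_{n\ell}(z) = \sum_{j=1}^n (\E_j - \E_{j-1}) Y_{n\ell}(z),
\end{align*}
a sum of martingale differences. By the Sherman-Morrison identity, each increment can be expressed through the leave-one-out resolvent $\Upsilon_j = (\bS_n - \sqrt{p^2/m_p}\,\bx_j\bx_j^T/n - z\bI)^{-1}$, producing rank-one update terms. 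Since $\bx_j = \rho_j \bu_j$ with $\bu_j$ and $\rho_j$ independent, conditioning first on $\rho_j$ and then integrating over $\bu_j \sim U(S^{p-1})$ reduces the conditional moment computations to polynomial integrals on the sphere.

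Next I would verify the conditions of a CLT for martingale difference arrays. The Lyapunov-type condition follows from standard concentration of quadratic forms $\bu_j^T \bM \bu_j$ on the sphere, combined with the uniform spectral bound on $\bGa_n$ and the moment assumptions on $\rho_j$. The core computation is the limit of
\begin{align*}
\sum_{j=1}^n \E_{j-1}\bigl[(\E_j - \E_{j-1}) Y_{n1}(z_1)\,(\E_j - \E_{j-1}) Y_{n2}(z_2)\bigr].
\end{align*}
Using the moment identity $\E[\bu\bu^T \bM \bu\bu^T] = \frac{1}{p(p+2)}(\tr \bM\cdot\bI + \bM + \bM^T)$ for $\bu \sim U(S^{p-1})$ together with $\E[(\rho_j^2 - p)^2] = \nu_p$, this sum decomposes into two pieces: a \emph{direction} contribution from quartic monomials in $\bu_j$ (with $\rho_j^2$ replaced by its mean), and a \emph{radius} contribution from the fluctuation of $\rho_j^2$. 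The former produces the symmetric combination $r_{14}(z_1,z_2)r_{23}(z_1,z_2) + r_{13}(z_1,z_2)r_{24}(z_1,z_2)$, while the latter, of order $\nu_p/p^2$ and thus of order $1$ under the normalization $\sqrt{p^2/m_p}$ built into $\bS_n$, yields $r_{12}(z_1)r_{34}(z_2)$.

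To identify the scalar prefactors $h_1$ and $h_2$, I would replace $\Upsilon_j$ inside these conditional expectations by its deterministic equivalent $-z^{-1}(\bI_p + g_{2n}^0(z)\bSi_n)^{-1}$, whose accuracy follows from Theorem~\ref{thmlsd}. Averages of the form $\frac{1}{n}\sum_j \bu_j^T \bGa_n^T \Upsilon_j(z_1) \bM \Upsilon_j(z_2) \bGa_n \bu_j$ then reduce to traces of products of deterministic resolvents at $z_1$ and $z_2$, evaluated via the standard divided-difference device, writing cross-terms as $(g_1(z_1)-g_1(z_2))/(z_1-z_2)$ and $(g_2(z_1)-g_2(z_2))/(z_1-z_2)$, and then using the relations~(\ref{mpf}) and~(\ref{g12}) to eliminate $m$ in favour of $g_1$, $g_2$. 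The denominator $1 - d(z_1, z_2)$ emerges as the sum of a geometric series, reflecting an implicit equation at the covariance level: the direction-piece contribution depends on itself through a factor $d(z_1,z_2)$ per iteration of Sherman-Morrison. Tightness of $M_n$ on the compact set $\mathcal{Z}$ then follows from analyticity and standard moment estimates.

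The main obstacle I anticipate is the bias correction: to replace the centering by $\E Y_{n\ell}(z)$ above by the deterministic centering $-\sqrt{p}\,z^{-1}\pi_{n,2\ell-1}^T(\bI_p + g_{2n}^0(z)\bSi_n)^{-1}\pi_{n,2\ell}$ appearing in $M_n(z)$, one must show
\begin{align*}
\sqrt{p}\bigl(\E \mathcal{B}_\ell(\Upsilon(\bS_n,z)) + z^{-1}\pi_{n,2\ell-1}^T(\bI_p + g_{2n}^0(z)\bSi_n)^{-1}\pi_{n,2\ell}\bigr) \to 0.
\end{align*}
Theorem~\ref{thmlsd} only asserts that this quantity tends to zero, not at the required rate $o(p^{-1/2})$. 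A finer expansion via repeated applications of Sherman-Morrison is needed, tracking delicate cancellations between the $O(n^{-1/2})$ fluctuations from the uniform direction $\bu_j$ and the $O(\sqrt{\nu_p}/p)$ fluctuations from the radius $\rho_j$; the precise balance here is what the normalization $\sqrt{p^2/m_p}$ is designed to maintain. I expect this bias calculation, rather than the martingale CLT proper, to be the principal source of technical difficulty.
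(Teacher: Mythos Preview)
Your outline matches the paper's route: martingale decomposition via Sherman--Morrison, replacement of $\beta_k$ by its deterministic analogue $\psi_k$, the sphere moment identity producing the direction/radius split, and an implicit relation for the two-resolvent bilinear form yielding the $1/(1-d)$ factor. Two points of emphasis differ. First, the covariance computation in the paper hinges on a sample-replacement device you do not mention: one introduces $\breve{\bA}_k^{-1}(z_2)$ built from independent copies $\breve{\br}_{k+1},\ldots,\breve{\br}_n$ for the indices $j>k$, so that $\E_k[f(\bA_k^{-1}(z_2))]$ can be rewritten with $\breve{\bA}_k$ and the two factors in $\E_{k-1}[Y_{k1}(z_1)Y_{k2}(z_2)]$ decouple for $j>k$; the implicit equation is then derived for $\E_k[\pi^T\bA_k^{-1}(z_1)\bSi_n\breve{\bA}_k^{-1}(z_2)\pi]$ with a $k$-dependent coefficient $1-\tfrac{k-1}{n}d_n$, and it is the Riemann sum $\tfrac{1}{n}\sum_k(1-\tfrac{k-1}{n}d_n)^{-2}\to(1-d)^{-1}$, not a geometric series, that produces the denominator. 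Second, you overestimate the bias step: in the paper $M_{n\ell}^2(z)\to0$ follows in a few lines once one imports $\sqrt{p}\bigl(n^{-1}\E\tr(\bA^{-1}\bSi_n)-g_{1n}^0\bigr)\to0$ from the existing separable-covariance literature, so the genuine work lies in the covariance calculation rather than the bias.
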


With the aid of the above theorem, one can justify the limiting joint distribution of certain variables related to the resolvent. For instance, by choosing $$\mb\pi_{n1}=\be_j, \ \mb\pi_{n2}=\be_k, \ \mb\pi_{n3}=\be_l, \ \mb\pi_{n4}=\be_{t}$$
and letting $z_1\to z_2,$ we can derive the limiting variances and covariance of the entry lying in the $j$-th row, $k$-th column, and the entry lying in the $l$-th row, $t$-th column of $\sqrt{p}\Upsilon\(\bA_n,z\)$.

We would like to delve deeper into the theorem above. It is crucial to note that both the limiting covariance function and the centralizing term rely on the distribution of the radius $\rho$ through $g_{2n}^0(z)$. This dependence is solely influenced by the properties of $m_p^{-1/2}\rho_1^2$, whose distributions remain consistent when $\nu_p=o(p^2)$. Consequently, we can deduce that a phase transition will occur as $\nu_p$ transitions from $o(p^2)$ to the order of $p^2$. In other words, for a light-tailed elliptical distribution, the asymptotic properties of the bilinear forms $\mathcal{B}\(\Upsilon(\bA_n,z)\)$ are independent of specific distributions. However, when the fluctuation of $\rho^2$, denoted by $\nu_p/p^2$, deviates from 0, a critical point is reached, marking a shift in the scenario. At this juncture, the impact of nonlinear dependence, induced by the random radius, becomes pronounced enough to influence the asymptotic properties of $\mathcal{B}\(\Upsilon(\bA_n,z)\)$. 

\section{Statistical applications}

In this dedicated section, we leverage the robust CLT established for bilinear forms and channel its applicability into two distinct directions within the statistical domain. These directions not only broaden the scope of our theoretical framework but also enhance its practical relevance in addressing nuanced challenges encountered in statistical analyses.
The first avenue of exploration involves the functional CLT, a pivotal concept in the realm of eigenvector statistics pertaining to sample covariance matrices. Our established CLT for bilinear forms provides a solid foundation for delving into the intricacies of eigenvector statistics. 
Simultaneously, our focus extends to the second direction, which centers around the analysis of spiked eigenvalues and eigenvectors within a spiked model. 

\subsection{Functional CLT for eigenvector statistics}\label{FCLT} In this subsection, we embark on the application of the theoretical insights acquired in the preceding section to scrutinize the asymptotic properties of the eigenmatrix of $\bS_n.$ Before delving into the details, we find it necessary to introduce some fundamental definitions and background information that will lay the groundwork for our subsequent analysis.

	Given $\pi_n$, the \emph{\bf vector empirical spectral distribution} (VESD)
        function based on eigenvalues and eigenvectors of matrix $\mb
        A_n$ is defined as 	
	$$F_{v,\pi_n}^{\mb A_n}(x)=\sum_{j=1}^p|q_j|^2I(\lambda_j\leq x).$$ Understanding this function is pivotal for unraveling the intricacies of the eigenmatrix and its statistical behavior. Now, when the underlying data originates from a multivariate Gaussian distribution, the sample covariance matrix $\bS_n$ is a Wishart matrix. According to established results like those in \cite{Anderson03I} or Corollary 2.2 of \cite{Dumitriu2002}, the eigenmatrix of $\bS_n$ follows the Haar distribution. Formally, if we express $\bS_n$ as $\mb U_n \Lambda_n \mb U_n^T$, where $\mb U_n$ is the orthogonal matrix containing the eigenvectors and $\Lambda_n$ is a diagonal matrix of eigenvalues, then $\mb U_n$ conforms to the uniform distribution over the group formed by all orthogonal matrices. 
Moreover, for any unit vector $\mb\pi_n\in\mathbb{R}^p$, the random vector $\mb q_n=\mb
U_n\pi_n\doteq\(q_1,\cdots,q_p\)^T$ follows a uniform distribution over the unit sphere. This underscores the uniformity and isotropy of the eigenvectors associated with the Wishart matrix. Furthermore, consider the stochastic process defined as
$$\mathbb
{Q}_p(t)=\sqrt{\frac{p}{2}}\sum_{j=1}^{[pt]}\(|q_j|^2-\frac1n\)\overset{d}{=}\sqrt{\frac{p}{2}}\frac{1}{|\mb z|^2}\sum_{j=1}^{[pt]}\(|z_j|^2-\frac{|\mb z|^2}{p}\).$$ This process converges in distribution to a Brownian Bridge $\mathbb{B}(t)$ as $p\to \infty$, see Page 334 in \cite{BaiS10S}. This convergence provides a bridge to understanding the limiting behavior of the eigenmatrix. 
For any matrix $\mb A_n$, we introduce a time transformation denoted as $\mathcal {Q}_p^{\mb A_n}(x)=\mathbb {Q}_p(F^{\mb A_n}(x)).$ This transformation is applied to the stochastic process $\mathbb{Q}_p(t)$ using the ESD $F^{\mb A_n}(x)$. Subsequently, the transformed process $\mathcal{Q}_p^{{\bS}_n}(x)$ serves as an approximation to $\mathbb{B}(F^{c,H_1,H_2}(x))$.
Recalling the definitions of the ESD and the VESD, we can express the transformed process as follows: 
$$\mathcal {Q}_p^{\bS_n}(x)=\sqrt{\frac{p}{2}}\(F_{v,\pi_n}^{\bS_n}(x)-F^{\bS_n}(x)\).$$
This transformation allows us to reframe the study of $\mathbb{Q}_p(t)$ into the investigation of the discrepancy between the ESD and VESD. This shift in perspective not only simplifies the analysis but also provides a meaningful connection between the properties of the eigenmatrix and the convergence behavior encapsulated in $\mathbb{Q}_p(t)$. 

Through this investigatory approach, notable efforts have been dedicated to exploring the universality of the Haar conjecture in high dimension. A somewhat unexpected revelation, as articulated in Theorem 10.2 within \cite{BaiS10S}, asserts that, under ICS conditions, the eigenmatrix's requisite condition for adhering to the Haar conjecture demands that the underlying distribution exhibits a fourth moment akin to a Gaussian distribution. This phenomenon implies a substantial impact of the fourth moment on the asymptotic structure of the eigenmatrix under ICS.  
The impact of nonlinear elliptical correlation on the asymptotic structure of the eigenmatrix can be elucidated through the application of the newly established CLT for bilinear forms. To illustrate this, consider any function $g$ that is analytic on an open set containing the supports of $F_{v,\pi_n}^{\bS_n}(x)$ and $F^{\bS_n}(x)$. By the Cauchy integral formula, the following relation holds for large $n$:
$$
\int g(x) \mathrm{d} \(F_{v,\pi_n}^{\bS_n}(x)-F^{\bS_n}(x)\)=-\frac{1}{2 \pi \mathrm{i}} \int_{\mathcal{C}} g(z)\( s_{F_{v,\pi_n}^{\bS_n}}(z)-s_{F^{\bS_n}}(z)\) \mathrm{d} z,
$$
where $\mathcal{C}$ is a contour that encompasses the real interval defined by:
\begin{align}\label{inter}
	\Bigg[a\liminf_n\lambda_{min}^{\bSi_n}I_{(0,1)}(c)\(1-\sqrt{c}\)^2,b\ \limsup_n\lambda_{max}^{\bSi_n}\(1+\sqrt{c}\)^2\Bigg].
\end{align}
Here, $a$ and $b$ represent the lower and upper bounds of the support of $H_2$, respectively. 

Let's define $s_{c_n,\pi_n}^{\bSi_n,\nu_p}(z)=z^{-1}\mb\pi_{n}^T
\(\bI_p+g_{2n}^0(z)\bSi_n\)^{-1}\mb\pi_{n}$,  representing the Stieltjes transform of the anisotropic M-P law $F_{c_n,\pi_n}^{\bSi_n,\nu_p}(x)$. We shall introduce 
$$\mathbb{G}_n(x)=\sqrt{p}\(F_{{v},\pi_n}^{\bS_n}(x)-F_{c_n,\pi_n}^{\bSi_n,\nu_p}(x)\).$$
Consider test functions $\zeta_1,
 \cdots,\zeta_k$ analytic on an open set containing (\ref{inter}). The functional CLT for eigenvector statistics can then be expressed as follows: 
\begin{thm}\label{vecClt}
	Under the assumptions of Theorem \ref{thclt}, we have the following results.
\begin{description}
\item [I:] The $k$ dimensional random
  vectors $$\Psi_n=\(\psi_{1,n},\cdots,\psi_{k,n}\)'=\(\int \zeta_1(x)d
  \mathbb{G}_n(x), \cdots, \int \zeta_k(x)d \mathbb{G}_n(x)\)'$$ form a tight sequence.
\item [II:] The random vectors $\Psi_n$ converge weakly to a
  mean zero Gaussian vector $\Psi=\(\psi_{1},\cdots,\psi_{k}\)'$.
\item [III:] For $1\leq
  t,s\leq k,$
\begin{align}
	\cov\(\psi_{t},\psi_{s}\)=-\frac{1}{2\pi^2}\int_{\mathcal{C}_1}\int_{\mathcal{C}_2}\zeta_t(z_1)\zeta_s(z_2)\varpi(z_1,z_2)dz_1dz_2,
\end{align} where ${\cal C}_1, {\cal C}_2$ are two non-overlapping contours enclosing the support of $F^{c,H_1,H_2}$ and \begin{align*}
	\varpi(z_1,z_2)=&2h_{1}(z_1,z_2)r_{11}(z_1,z_2)r_{11}(z_1,z_2)+{h_{2}(z_1,z_2)}r_{11}(z_1)r_{11}(z_2).
\end{align*} 
\end{description}
\end{thm}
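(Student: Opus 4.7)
The plan is to reduce the functional CLT for the signed measure $d\mathbb{G}_n$ to the bilinear-form CLT in Theorem \ref{thclt} via Cauchy's integral formula, and then read off the covariance by a double contour integration.

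First I would fix two non-overlapping closed contours $\mathcal{C}_1,\mathcal{C}_2\subset\mathcal{Z}$, each strictly enclosing the deterministic interval in (\ref{inter}). On the high-probability event $\mathcal{A}_n$ that every eigenvalue of $\bS_n$ and the whole support of $F_{c_n,\pi_n}^{\bSi_n,\nu_p}$ lies inside that interval, Cauchy's formula yields
$$\psi_{t,n}=\int\zeta_t(x)\,d\mathbb{G}_n(x)=-\frac{1}{2\pi i}\oint_{\mathcal{C}_1}\zeta_t(z)\,\sqrt{p}\bigl(s_{F_{v,\pi_n}^{\bS_n}}(z)-s_{F_{c_n,\pi_n}^{\bSi_n,\nu_p}}(z)\bigr)\,dz.$$
The VESD identity $s_{F_{v,\pi_n}^{\bS_n}}(z)=\mb\pi_n^{T}\Upsilon(\bS_n,z)\mb\pi_n=\mathcal{B}_1(\Upsilon(\bS_n,z))$ with $\mb\pi_{n,1}=\mb\pi_{n,2}=\mb\pi_n$ identifies the integrand with the process $M_{n1}(z)$ appearing in Theorem \ref{thclt} (up to an immaterial sign convention). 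Consequently, on $\mathcal{A}_n$, $\Psi_n$ is a bounded continuous linear functional of the restriction of $M_{n1}(\cdot)$ to $\mathcal{C}_1$, and the statement reduces to a functional CLT for $M_{n1}$ along $\mathcal{C}_1$.

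Next, applying Theorem \ref{thclt} with $\mb\pi_{n,1}=\mb\pi_{n,2}=\mb\pi_{n,3}=\mb\pi_{n,4}=\mb\pi_n$ at arbitrary finite families of spectral parameters furnishes the finite-dimensional weak convergence of $M_{n1}(\cdot)$ to a centred Gaussian process $M_1(\cdot)$; specialising the covariance formula collapses every $r_{jk}$ to $r_{11}$ and produces exactly $\cov(M_1(z_1),M_1(z_2))=\varpi(z_1,z_2)$. Once combined with tightness, this upgrades to functional convergence $M_{n1}\Rightarrow M_1$ in $C(\mathcal{C}_1)$. The continuous mapping theorem, applied to the bounded linear functional $f\mapsto -(2\pi i)^{-1}\oint_{\mathcal{C}_1}\zeta_t(z)f(z)\,dz$, then yields the joint weak convergence of $\Psi_n$ to a centred Gaussian vector $\Psi$, and Fubini together with the analyticity of $\varpi$ on $\mathcal{C}_1\times\mathcal{C}_2$ turns $\cov(\psi_t,\psi_s)$ into the asserted double contour integral in part III (up to the routine bookkeeping of contour orientation and sign).

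The hardest step will be the tightness of $M_{n1}(\cdot)$ on $\mathcal{C}_1$. I would invoke Billingsley's moment criterion, reducing it to a uniform estimate of the form $\mathbb{E}|M_{n1}(z)-M_{n1}(z')|^{2}\leq C|z-z'|^{2}$ for $z,z'\in\mathcal{C}_1$. Because $\mathcal{C}_1\subset\mathcal{Z}$ the spectral norms of $\Upsilon(\bS_n,z)$ and of $(\bI_p+g_{2n}^0(z)\bSi_n)^{-1}$ are uniformly controlled along the contour, and the resolvent identity $\Upsilon(\bS_n,z)-\Upsilon(\bS_n,z')=(z-z')\Upsilon(\bS_n,z)\Upsilon(\bS_n,z')$ reduces the estimate to variance bounds for quadratic forms $\mb u_j^{T}A\mb u_j$ in the spherically distributed direction vectors---the same tools already used in the proof of Theorem \ref{thclt}. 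A secondary technicality is verifying $\mathbb{P}(\mathcal{A}_n)\to 1$, a no-outliers statement for the normalized elliptical sample covariance model, which follows from the spectral-support analysis underlying Section \ref{secct}. Once these two ingredients are in place, tightness of $\Psi_n$ and hence parts I--III are immediate.
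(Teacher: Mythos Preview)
Your proposal is correct and follows essentially the same route as the paper: reduce to the process $M_{n1}(z)$ via Cauchy's integral formula, invoke Theorem \ref{thclt} with $\mb\pi_{n,1}=\cdots=\mb\pi_{n,4}=\mb\pi_n$ for finite-dimensional convergence and the covariance $\varpi(z_1,z_2)$, and use the tightness estimate already established in the proof of Theorem \ref{thclt} together with the continuous mapping theorem. The only technical difference is how the contour's passage through the real axis is handled: the paper truncates the process, replacing $M_n(z)$ by a modified $\hat M_n(z)$ frozen on the vertical segments $\{|\Im z|<n^{-1}\varepsilon_n\}$ and showing $M_n-\hat M_n\to0$ almost surely, whereas you instead condition on the no-outliers event $\mathcal A_n$; these are interchangeable standard devices, but be aware that a closed contour cannot literally lie inside $\mathcal Z$ if $\mathcal Z\subset\mathbb C^+$, which is exactly why the paper introduces the truncation.
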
 

This theorem unveils the universality of the Haar conjecture within the realm of elliptical distributions, even in the presence of nonlinear dependencies between variables, as long as $\nu_p=o(p^2)$.

\subsection{Asymptotic distribution of spiked eigenvalue and eigenvector} Gaining insights into the characteristics of spiked eigenvalues and their associated eigenvectors within a spiked sample covariance matrix holds paramount significance in a multitude of statistical applications, with a prominent example being Principal Component Analysis (PCA). In PCA, the identification of principal components associated with spiked eigenvalues serves as a pivotal mechanism for dimensionality reduction and feature extraction. This analytical approach proves particularly valuable in scenarios where datasets showcase a dominant signal. By pinpointing the spiked eigenvalues, one can extract crucial information about the intrinsic structure underlying the data. This understanding not only aids in optimizing data representation but also enhances the interpretability and effectiveness of statistical analyses.
Since the seminal work by \cite{Johnstone01D}, the exploration of this topic in the realm of high-dimensional statistics has garnered significant attention. Numerous authors have delved into the subject, progressively refining and expanding the models to accommodate a broader range of scenarios. The evolution of these models underscores the dynamic nature of statistical research in adapting to the demands of contemporary datasets. For the most recent advancements under the ICS model, we recommend consulting up-to-date references such as \cite{zhangzheng2022,Bao2022,TonyCai2020a,onatski2009testing}.

To better align the results of the spiked model with real high-dimensional datasets, we aim to explore the asymptotic distribution of sample spiked eigenvalues and eigenvectors under the ECS model by leveraging our result in Theorem \ref{thclt}. Consider the general spiked model, as introduced in \cite{zhangzheng2022}. Let $\bGa_n$ be decomposed using singular value decomposition as follows:
\begin{align*}
	\bGa_n=\bV\begin{pmatrix}
		\mb\Lambda_S^{1/2}&0\\
		0&\mb\Lambda_P^{1/2}
	\end{pmatrix}\bU^T
\end{align*}where $\bU$ and $\bV$ are orthogonal matrices, $\mb\Lambda_S$ is a diagonal matrix consisting of the spiked eigenvalues in descending order, and $\mb\Lambda_P$ is the diagonal matrix of the bounded non-spiked eigenvalues. Let's partition $\bU$ as $\bU=\(\bU_1,\bU_2\)$, where $\bU_1$ is a $p\times K$ submatrix of $\bU$. Define $\bY_n=\frac{\sqrt[4]{{p^2}/m_p}}{\sqrt n}\bX_n$ and
\begin{align*}
	\bSi_{1p}=\bU_2\mb\Lambda_P\bU_2^T=\bU\begin{pmatrix}
		\mb 0_S&0\\
		0&\mb\Lambda_P
	\end{pmatrix}\bU^T=\(\bU\begin{pmatrix}
	\mb 0_S^{1/2}&0\\
	0&\mb\Lambda_P^{1/2}
\end{pmatrix}\bU^T\)^2\triangleq\bG^2.
\end{align*}
Order the eigenvalues of $\bS_n$ as $\lambda_1\ge\lambda_2\ge\cdots\ge\lambda_p$. The sample spiked eigenvalues $\lambda_j (j=1,\cdots, K)$ of $\bS_n$ are determined by the equation involving the determinant:
\begin{align*}
	{\rm det}\left\{\mb\Lambda_S^{-1}-\bU_1^T\bY_n\(\lambda_j\bI-\bY_n^T\bSi_{1p}\bY_n\)^{-1}\bY_n^T\bU_1\right\}=0.
\end{align*}
Clearly, the columns of $\bU_1$ are orthogonal to $\bSi_{1p}$. It is noteworthy to highlight that, under ECS model, the target matrix mentioned above can be simplified by exploiting the property of elliptical distribution. Specifically, we have the option to diagonalize $\bSi_{1p}$ due to the characteristics of elliptical distributions. However, for the sake of maintaining generality and relevance to a broader spectrum of data models, we intentionally refrain from this simplification. 
In the literature under the ICS model, researchers have investigated the properties of the random matrix  $\bU_1^T\bY_n\(\lambda_j\bI-\bY_n^T\bSi_{1p}\bY_n\)^{-1}\bY_n^T\bU_1$ directly. For example, in \cite{Jiang2021b}, the authors established a general fourth-moment theorem to show that the distribution of this matrix remains the same when the underlying distribution is replaced by another one, provided they share the same fourth moment. In contrast, \cite{zhangzheng2022} studied the asymptotic distribution of the entries in this matrix directly by applying a martingale decomposition method. However, in this work, we will demonstrate through perturbation arguments that the study of this random matrix can be accomplished through properties of bilinear forms.  

 To see this, let $\tilde\bG=\bG+\varepsilon\bI_p$, so $\tilde\bG$ is invertible. 
Define 
\begin{align*}
	\Phi(z,\varepsilon)=&\bw_1^T\tbG^{-1}\(\tbG\bY_n\bY_n^T\tbG-z\bI_p\)^{-1}\tbG^{-1}\bw_1+z^{-1}\bw_1^T\tbG^{-1}\(\bI_p+g_{2n}^0(z)\tbG\tbG\)^{-1}\tbG^{-1}\bw_1,\\
	\Psi(z,\varepsilon)=&\bw_1^T\tbG^{-1}\tbG^{-1}\bw_1-\bw_1^T\tbG^{-1}\(\bI_p+g_{2n}^0(z)\tbG\tbG\)^{-1}\tbG^{-1}\bw_1.
\end{align*}
In the context of Theorem \ref{thclt}, considering the implications for $H_{1n}$ in connection to the ESD of $\bSi_{1p}$  and adjusting parameter definitions accordingly, we deduce that
$\sqrt p \Phi(z,\varepsilon)$ 
converges weakly to a Gaussian distribution  $N\(0,\sigma^2_1(z,\varepsilon)\)$, where
\begin{align*}
	\sigma_{1}^2(z,\varepsilon)=&\lim_{z_1\to z_2}\(2h_{1}(z_1,z_2)r_{11}^2(z_1,z_2)+{h_{2}(z_1,z_2)}r_{11}(z_1)r_{11}(z_2)\)\\
	=&\left[\frac {2c\(zg_2(z)\)'g_2'(z)}{z^2\(z\underline m(z)\)'}+\frac{c\(g_2'(z)\)^2\(\underline m(z)/g_2(z)\)'}{g_1'(z)}\right]\frac1{\(1+\varepsilon^2g_2(z)\)^2}.
\end{align*}
Using  the formula
\begin{align*}
	\bA\(\lambda\bI+\bB\bA\)^{-1}=	\(\lambda\bI+\bA\bB\)^{-1}\bA,
\end{align*}
and letting $\varepsilon\to0,$ we obtain 
\begin{align*}
	&z\bw_1^T\bY_n\(z\bI_p-\bY_n^T\bSi_{1p}\bY_n\)^{-1}\bY_n^T\bw_1\\
=&\lim_{\varepsilon\to0}\(-z\bw_1^T\tbG^{-1}\tbG^{-1}\bw_1-z^2\bw_1^T\tbG^{-1}\(\tbG\bY_n\bY_n^T\tbG-z\bI_p\)^{-1}\tbG^{-1}\bw_1\)\\
=&\lim_{\varepsilon\to0}\(-z\Psi(z,\varepsilon)-z^2\Phi(z,\varepsilon)\).
\end{align*}
Hence, define $$\mathcal{O}_{K\times K}(z)=\sqrt pz\(\bU_1^T\bY_n\(z\bI-\bY_n^T\bSi_{1p}\bY_n\)^{-1}\bY_n^T\bU_1+g_{2n}^0(z)\bI_{K}\),$$ and it follows that
\begin{align*}
	&\mathcal{O}_{11}(z)=\sqrt pz\(\bw_1^T\bY_n\(z\bI-\bY_n^T\bSi_{1p}\bY_n\)^{-1}\bY_n^T\bw_1+g_{2n}^0(z)\)
\end{align*}
converges weakly to Gaussian distribution $N\(0,\sigma^2_{11}(z)\)$ with variance 
\begin{align*}
	\sigma_{11}^2(z)=&\left[\frac {2cz^2\(zg_2(z)\)'g_2'(z)}{\(z\underline m(z)\)'}+\frac{cz^4\(g_2'(z)\)^2\(\underline m(z)/g_2(z)\)'}{g_1'(z)}\right].
\end{align*}
Applying a similar argument, we have  \begin{align*}
	&\mathcal{O}_{12}(z)=\sqrt pz\bw_1^T\bY_n\(z\bI-\bY_n^T\bSi_{1p}\bY_n\)^{-1}\bY_n^T\bw_2
\end{align*}
converges weakly to Gaussian distribution $N\(0,\sigma^2_{12}(z)\)$, where
\begin{align*}
		\sigma_{12}(z)=&\frac {cz^2\(zg_2(z)\)'g_2'(z)}{\(z\underline m(z)\)'}.
\end{align*}
Furthermore, we observe that ${\rm Cov}(\mathcal{O}{11}(z), \mathcal{O}{12}(z))$ converges to 0. By combining the aforementioned arguments, we essentially establish the following lemma.

\begin{lem}\label{thmspike}
Assuming the conditions outlined in Theorem \ref{thclt} are satisfied, we can establish the following conclusion: the random matrix $\mathcal{O}(z)$ weakly converges to a zero-mean Gaussian Orthogonal Ensemble (GOE) matrix $\mathcal{O}^{L}(z)=(\mathcal{O}^{L}{\mathfrak{i},\mathfrak{j}}(z))_{K\times K}$ with a covariance profile given by: 

 \begin{align*}
 	{\rm Cov}(\mathcal{O}^{L}_{\mathfrak{i},\mathfrak{j}}(z),\mathcal{O}^{L}_{\mathfrak{k},\mathfrak{l}}(z))=\begin{cases}
 		\sigma_{11}(z), \quad &  \mathfrak{i}=\mathfrak{j}=\mathfrak{k}=\mathfrak{l}\\
 		\sigma_{12}(z), \quad & (\mathfrak{i}=\mathfrak{k} \ {\rm{and}} \ \mathfrak{j}=\mathfrak{l}) \ {\rm or}\ (\mathfrak{i}=\mathfrak{l} \ {\rm{and}} \ \mathfrak{j}=\mathfrak{k});\\
 		0,\quad & {\rm otherwise},
 	\end{cases}
 \end{align*}    

where 
\begin{align*}
	\sigma_{1}^2(z)=&\frac {2cz^2\(zg_2(z)\)'g_2'(z)}{\(z\underline m(z)\)'}+\frac{cz^4\(g_2'(z)\)^2\(\underline m(z)/g_2(z)\)'}{g_1'(z)},\ 
	\sigma_{12}^2(z)=\frac {cz^2\(zg_2(z)\)'g_2'(z)}{\(z\underline m(z)\)'}.
\end{align*}
\end{lem}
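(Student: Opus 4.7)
The plan is to reduce the joint convergence of all entries of $\mathcal{O}(z)$ to the multivariate CLT in Theorem \ref{thclt}, leveraging the perturbation identity already set up in the excerpt. Concretely, for each pair $(\mathfrak{i},\mathfrak{j})$ I will introduce a fresh perturbation $\tilde{\bG}=\bG+\varepsilon\bI_p$ and read off
\begin{align*}
\mathcal{O}_{\mathfrak{i}\mathfrak{j}}(z)=-\sqrt p\,\bigl[z\,\Psi_{\mathfrak{i}\mathfrak{j}}(z,\varepsilon)+z^2\,\Phi_{\mathfrak{i}\mathfrak{j}}(z,\varepsilon)\bigr]+o(1),
\end{align*}
where $\Psi_{\mathfrak{i}\mathfrak{j}},\Phi_{\mathfrak{i}\mathfrak{j}}$ are the analogues of $\Psi,\Phi$ built with $\bw_{\mathfrak{i}},\bw_{\mathfrak{j}}$ in place of $\bw_1,\bw_1$. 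Since the columns $\bw_{\mathfrak{i}},\bw_{\mathfrak{j}}$ of $\bU_1$ lie in the null space of $\bG$, one has $\tilde\bG^{-1}\bw_{\mathfrak{i}}=\varepsilon^{-1}\bw_{\mathfrak{i}}$ and $\tilde\bG^2\bw_{\mathfrak{i}}=\varepsilon^2\bw_{\mathfrak{i}}$; a short direct computation then gives $\Psi_{\mathfrak{i}\mathfrak{j}}(z,\varepsilon)\to g_{2n}^0(z)\delta_{\mathfrak{i}\mathfrak{j}}$ as $\varepsilon\downarrow 0$, so that after adding $\sqrt p\, z\, g_{2n}^0(z)\delta_{\mathfrak{i}\mathfrak{j}}$ the deterministic parts cancel and the random content of $\mathcal{O}_{\mathfrak{i}\mathfrak{j}}(z)$ is asymptotically $-z^2\sqrt p\,\Phi_{\mathfrak{i}\mathfrak{j}}(z,0^+)$.

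Next I apply Theorem \ref{thclt} simultaneously to the finite collection $\{\Phi_{\mathfrak{i}\mathfrak{j}}\}_{1\le\mathfrak{i}\le\mathfrak{j}\le K}$ using the Cram\'er--Wold device: any fixed linear combination is a bilinear form of the type handled by the theorem, yielding joint Gaussianity of the matrix $(\mathcal{O}_{\mathfrak{i}\mathfrak{j}}(z))$. The covariance structure is then extracted from the covariance kernel of Theorem \ref{thclt}: with the four vectors chosen as $\tilde\bG^{-1}\bw_{\mathfrak{i}},\tilde\bG^{-1}\bw_{\mathfrak{j}},\tilde\bG^{-1}\bw_{\mathfrak{k}},\tilde\bG^{-1}\bw_{\mathfrak{l}}$ and the effective $\bSi_n$ replaced by $\tilde\bG^2$, the spectral decomposition above yields
\begin{align*}
r_{ab}(z_1,z_2)\;\longrightarrow\;\delta_{\alpha\beta}\quad\text{and}\quad r_{ab}(z)\;\longrightarrow\;\delta_{\alpha\beta}\qquad(\varepsilon\downarrow 0),
\end{align*}
where $\alpha,\beta\in\{\mathfrak{i},\mathfrak{j},\mathfrak{k},\mathfrak{l}\}$ label the underlying $\bU_1$-columns. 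Substituting into the covariance formula gives
\begin{align*}
\lim_{\varepsilon\downarrow 0}\cov\bigl(\sqrt p\,\Phi_{\mathfrak{i}\mathfrak{j}}(z,\varepsilon),\sqrt p\,\Phi_{\mathfrak{k}\mathfrak{l}}(z,\varepsilon)\bigr)=h_1(z,z)\bigl[\delta_{\mathfrak{i}\mathfrak{l}}\delta_{\mathfrak{j}\mathfrak{k}}+\delta_{\mathfrak{i}\mathfrak{k}}\delta_{\mathfrak{j}\mathfrak{l}}\bigr]+h_2(z,z)\,\delta_{\mathfrak{i}\mathfrak{j}}\delta_{\mathfrak{k}\mathfrak{l}}.
\end{align*}
Multiplying by $z^4$ and recognising the two scalar limits as $\sigma_{12}^2(z)$ and $\sigma_{11}^2(z)-2\sigma_{12}^2(z)$ (this identification is already carried out in the excerpt for $\mathcal{O}_{11}$ and $\mathcal{O}_{12}$) gives exactly the stated covariance profile: variance $\sigma_{11}^2$ on the diagonal, variance $\sigma_{12}^2$ off-diagonally, matching $\cov(\mathcal{O}^L_{\mathfrak{i}\mathfrak{j}},\mathcal{O}^L_{\mathfrak{j}\mathfrak{i}})=\sigma_{12}^2$ (i.e.\ symmetry of $\mathcal{O}(z)$, inherited from the symmetry of the underlying resolvent), and zero otherwise. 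This covariance pattern is precisely that of a GOE.

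The main obstacle, and the step that deserves care, is justifying the interchange of the limits $n\to\infty$ and $\varepsilon\downarrow 0$ uniformly across the $K^2$ pairs. Theorem \ref{thclt} is applied at fixed $\varepsilon>0$, so one needs an equicontinuity/tightness argument in $\varepsilon$ on a neighbourhood of $0^+$ to commute the two limits; this reduces to bounding the resolvent-type quantities $\tilde\bG^{-1}(\tilde\bG\bY_n\bY_n^T\tilde\bG-z\bI_p)^{-1}\tilde\bG^{-1}$ uniformly in $\varepsilon$ on the relevant contour, which one obtains from the standard resolvent identity together with the boundedness guarantee built into the admissible region $\mathcal{Z}$. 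The remaining bookkeeping (checking that the bracketed limits above equal $\sigma_{11}^2$ and $\sigma_{12}^2$, and verifying vanishing of the cross-covariances already noted between $\mathcal{O}_{11}$ and $\mathcal{O}_{12}$) is a direct substitution of the limiting $r$-values into the explicit expressions for $h_1,h_2$.
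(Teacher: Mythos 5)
Your proposal follows the same route as the paper: the $\varepsilon$-perturbation $\tilde\bG=\bG+\varepsilon\bI_p$, the identity $z\bw_{\mathfrak i}^T\bY_n(z\bI-\bY_n^T\bSi_{1p}\bY_n)^{-1}\bY_n^T\bw_{\mathfrak j}=\lim_{\varepsilon\to0}(-z\Psi_{\mathfrak i\mathfrak j}-z^2\Phi_{\mathfrak i\mathfrak j})$, an application of Theorem \ref{thclt} (via Cram\'er--Wold) to the family of bilinear forms with $H_{1n}$ tied to $\bSi_{1p}$, and the observation that $\tilde\bG^{-1}\bw_{\mathfrak i}=\varepsilon^{-1}\bw_{\mathfrak i}$ forces all the $r$-quantities to collapse to Kronecker deltas. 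Your treatment is in fact more careful than the paper's on two points: you make the joint convergence of all $K^2$ entries explicit rather than only exhibiting $\mathcal O_{11}$ and $\mathcal O_{12}$, and you correctly flag the interchange of $n\to\infty$ with $\varepsilon\downarrow0$ as the step needing a uniform-in-$\varepsilon$ resolvent bound (the paper passes over this silently).

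There is, however, an internal inconsistency in your last step. Your displayed limit
$h_1(z,z)[\delta_{\mathfrak i\mathfrak l}\delta_{\mathfrak j\mathfrak k}+\delta_{\mathfrak i\mathfrak k}\delta_{\mathfrak j\mathfrak l}]+h_2(z,z)\delta_{\mathfrak i\mathfrak j}\delta_{\mathfrak k\mathfrak l}$
is the correct specialization of the covariance kernel of Theorem \ref{thclt}, but it does not yield ``zero otherwise.'' Take $\mathfrak i=\mathfrak j=1$ and $\mathfrak k=\mathfrak l=2$: none of the first two cases of the stated covariance profile applies, yet your formula gives $\cov(\mathcal O_{11}(z),\mathcal O_{22}(z))\to z^4h_2(z,z)=\sigma_{11}^2(z)-2\sigma_{12}^2(z)$, which is nonzero unless $(\underline m/g_2)'\equiv0$, i.e.\ unless $\underline m=g_2$ (the light-tailed regime $\nu_p=o(p^2)$). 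So either you must restrict the GOE conclusion (equivalently, the vanishing of the covariance between distinct diagonal entries) to that regime, or the covariance profile must carry the extra value $z^4h_2(z,z)$ on the cells $\mathfrak i=\mathfrak j\neq\mathfrak k=\mathfrak l$. As written, your proof establishes the correct covariance kernel and then misreads it in the final identification; you should reconcile the $h_2\delta_{\mathfrak i\mathfrak j}\delta_{\mathfrak k\mathfrak l}$ term with the claimed profile explicitly rather than asserting that everything outside the two listed cases vanishes.
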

This result provides a clear understanding of the asymptotic behavior of the random matrix $\mathcal{O}(z)$ under the specified conditions, connecting it to a GOE matrix with a well-defined covariance structure. Leveraging the aforementioned lemma and employing similar arguments as in \cite{zhangzheng2022,Jiang2021b}, we can derive the almost sure limit and limiting distribution of the spiked eigenvalues under ECS. More specifically, assuming that the population spiked eigenvalues of $\bSi_n$, denoted by $\alpha_1>\cdots>\alpha_K$, we obtain the following Theorem \ref{spike}. The proofs are very similar to theirs; therefore, we omit them to avoid repetition. We remind the reader to recall that, in the following, $g_{2n}^0(z)$ is associated with $\bSi_{1p}$. We also note that the multiple spiked eigenvalue case can be investigated similarly using Lemma \ref{thmspike}.  

\begin{thm}\label{spike}
	Under the assumptions in Theorem \ref{thclt}, further assuming the separation condition that $\min {j \neq k}\left|\frac{\alpha_k}{\alpha_j}-1\right|>d$, we have, for $k=1,\cdots,K$, $\Delta_{k}\doteq\frac{\lambda_k-\mathcal{G}_{2n}({\alpha_k})}{\mathcal{G}_{2n}({\alpha_k})}\to 0, a.s.,$ provided $\mathcal{G}_{2n}'(\alpha_k)>0$ where $\mathcal{G}_{2n}$ is the transition function that satisfies $g_{2n}^0(\mathcal{G}_{2n}\(z\))=-z^{-1}$.
Also, denoting $\theta_{k}=\mathcal{G}_{2n}(\alpha_k),$ we have 
	\begin{align*}
		\frac{\sqrt{n}\Delta_{k}}{\sigma_{\Delta_k}}\to N(0,1),
	\end{align*}
	where $\sigma_{\Delta_k}^2=\frac {2\(\theta_{k}g_2(\theta_{k})\)'}{\(\theta_{k}\underline m(\theta_{k})\)'g_{2}'(\theta_{k})\theta_{k}^2}+\frac{\(\underline m(\theta_{k})/g_2(\theta_{k})\)'}{g_1'(\theta_{k})}.$
\end{thm}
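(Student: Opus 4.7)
The plan is to combine the secular equation for sample spiked eigenvalues with Lemma~\ref{thmspike} and a first-order Taylor expansion around the deterministic location $\theta_k=\mathcal{G}_{2n}(\alpha_k)$. Using the definition of $\mathcal{O}(z)$ given just above Lemma~\ref{thmspike}, the determinantal equation for spiked sample eigenvalues rewrites as
\[
\det\!\left\{\mb\Lambda_S^{-1}+g_{2n}^0(\lambda_j)\bI_{K}-\frac{1}{\sqrt{p}\,\lambda_j}\mathcal{O}(\lambda_j)\right\}=0.
\]
Under the separation hypothesis, I expect the sample spike $\lambda_k$ to be the root, near $\theta_k$, of the condition that the $(k,k)$ entry of the bracketed matrix vanishes, because for $j\neq k$ the diagonal entry $\alpha_j^{-1}+g_{2n}^0(\lambda_k)\approx\alpha_j^{-1}-\alpha_k^{-1}$ stays bounded away from zero, so the $K\times K$ determinantal equation decouples to leading order into $K$ scalar equations indexed by the spikes.

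\textbf{Consistency.} Dropping the $O_p(p^{-1/2})$ stochastic term, the leading equation $\alpha_k^{-1}+g_{2n}^0(\lambda_k)=0$ together with the defining relation $g_{2n}^0(\mathcal{G}_{2n}(z))=-z^{-1}$ pins down $\lambda_k\approx\theta_k=\mathcal{G}_{2n}(\alpha_k)$. To upgrade this to almost-sure convergence I would combine (i) the hypothesis $\mathcal{G}_{2n}'(\alpha_k)>0$, which provides local invertibility of $g_{2n}^0$ near $\theta_k$; (ii) a uniform bound $\|\mathcal{O}(z)\|=o_{\rm a.s.}(\sqrt{p})$ on a small contour around $\theta_k$, obtained from Theorem~\ref{thmlsd} specialized to the columns of $\bU_1$ together with standard concentration; and (iii) the implicit function theorem applied to the $(k,k)$ entry.

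\textbf{Fluctuations.} For the CLT, I would Taylor expand
\[
g_{2n}^0(\lambda_k)=g_{2n}^0(\theta_k)+(g_{2n}^0)'(\theta_k)(\lambda_k-\theta_k)+o_p(\lambda_k-\theta_k)
\]
and equate it to the stochastic perturbation $\mathcal{O}_{kk}(\lambda_k)/(\sqrt{p}\,\lambda_k)$, which yields the linear representation
\[
\sqrt{p}\,\Delta_k=\frac{\mathcal{O}_{kk}(\theta_k)}{\theta_k^{2}\,(g_{2n}^0)'(\theta_k)}+o_p(1).
\]
Lemma~\ref{thmspike} identifies $\mathcal{O}_{kk}(\theta_k)$ as asymptotically centered Gaussian with variance $\sigma_{11}^{2}(\theta_k)$. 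Replacing $(g_{2n}^0)'$ by its limit $g_2'$, using $\sqrt{p}=\sqrt{c_n}\cdot\sqrt{n}$, and verifying the algebraic simplification
\[
\frac{\sigma_{11}^{2}(\theta_k)}{c\,\theta_k^{4}\,(g_2'(\theta_k))^{2}}=\frac{2\,(\theta_k g_2(\theta_k))'}{(\theta_k\underline m(\theta_k))'\,g_2'(\theta_k)\,\theta_k^{2}}+\frac{(\underline m(\theta_k)/g_2(\theta_k))'}{g_1'(\theta_k)}
\]
should then deliver the stated $\sigma_{\Delta_k}^{2}$.

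\textbf{Main obstacle.} The chief difficulty is not the algebra, which is routine chain-rule bookkeeping among $g_1,g_2,\underline m$, but making the decoupling step rigorous. One needs a locally-uniform-in-$z$ version of Lemma~\ref{thmspike} over a shrinking neighborhood of $\theta_k$, together with quantitative lower bounds $|\alpha_j^{-1}-\alpha_k^{-1}|\ge d'$ for $j\neq k$ coming from the separation hypothesis, in order to justify that only the $k$-th diagonal contributes to leading order. A secondary technicality is the stochastic replacement of $\lambda_k$ by $\theta_k$ inside $\mathcal{O}(\cdot)$, which requires Lipschitz stability of $\mathcal{O}(z)$ in $z$ on the contour; this should follow from the resolvent identity and the already-established consistency $\Delta_k\to 0$.
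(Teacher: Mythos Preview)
Your proposal is correct and follows exactly the approach the paper indicates: the paper itself does not give a proof of Theorem~\ref{spike} but states that it follows from Lemma~\ref{thmspike} combined with the standard perturbation arguments of \cite{zhangzheng2022,Jiang2021b}, which is precisely your determinantal-equation-plus-Taylor-expansion scheme. Your identification of the linearization $\sqrt{p}\,\Delta_k=\mathcal{O}_{kk}(\theta_k)/\bigl(\theta_k^{2}(g_{2n}^0)'(\theta_k)\bigr)+o_p(1)$ and the subsequent variance algebra matching $\sigma_{\Delta_k}^2$ are both on target, and the technical obstacles you flag (uniform-in-$z$ control and decoupling via the separation condition) are exactly the points handled in those cited references.
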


\begin{rem}
In the special case where $\nu_p=o(p^2),$ a particularly interesting insight emerges from our analysis. Leveraging the relationship $\underline m(z)=g_{2}(z),$ the transition function $\mathcal{G}_{2n}(\cdot)$ of spiked eigenvalues simplifies to a well-established form, precisely given by $\psi(z)=z+c z \int \frac{t}{z-t} d H_1(t),$ a result that aligns with existing knowledge in the field.
Furthermore, under this specific scenario, the variance of the standardized spiked eigenvalue  $\sigma_{\Delta_k}^2,$ simplifies to $\frac {2}{\underline m'(\theta_{k})\theta_{k}^2},$ which is consistent with the known result under Gaussian case. This remarkable finding underscores the robustness of the asymptotic properties of sample spiked eigenvalues across a diverse range of elliptical distributions, provided that $\nu_p=o(p^2).$ It highlights a certain universality in the behavior of these eigenvalues, irrespective of the specific characteristics of the elliptical distribution, offering valuable insights into their statistical properties in high-dimensional settings.
\end{rem}

Moving forward, let's examine the scenario of the spiked sample eigenvector, focusing initially on a simplified case. In this simplified setting, we operate under the assumption of a single population spiked eigenvalue, allowing us to concentrate on the projection of sample eigenvectors onto the corresponding population eigenvector. It's worth noting that our decision to concentrate on the simplified scenario is motivated by the desire to offer a clear and focused presentation of our main contributions. 

More precisely, let's assume that the population spiked eigenvalues of  $\bSi_n$, are denoted by $\alpha_1>\cdots>\alpha_K$. For the population eigenvector $\mb v_{k}$ of the $k$-th spiked eigenvalue $\alpha_k$, we denote its associated sample version as $\mathcal{V}_k$. Our interest lies in the inner product  $\mathcal{I}_k=\mb v_k^T\mathcal{V}_k$ of these two vectors. Assuming the separation condition that $\min_{j \neq k}\left|\frac{\alpha_k}{\alpha_j}-1\right|>d$, according to the Cauchy integral formula, we have the following equality
\begin{align*}
\mathcal{I}_k^2=-\frac{1}{2 \pi  {i}} \oint_{\zeta_k} \mb v_k^T \Upsilon\(\bS_n,z\) \mb v_k \mathrm{d} z,
\end{align*}
where $\zeta_k$ enclosing $\lambda_k$ but  excludes the other eigenvalues. Thus, we turn our attention to the study of $\mb v_k^T \Upsilon\(\bS_n,z\) \mb v_k.$
Noting that
\begin{align*}
	\mb v_k^T \Upsilon\(\bS_n,z\) \mb v_k
=&\be_k^T\left(\boldsymbol{\Lambda^{1/2}}\bU^T\bY_n\bY_n^T\bU\boldsymbol{\Lambda^{1/2}}-z \mathbf{I}\right)^{-1}\be_k,
	\end{align*}
by the equation
$$\mb B(\lambda\bI-\bA\bB)^{-1}=(\lambda\bI-\mb B\mb A)^{-1}\mb B,$$
we find the right hand side to be
\begin{align*}
	-\frac{1}{\alpha_k}\(\frac{z}{\alpha_k}+
{z}\bu_k^T\bY_n\(\bY_n^T\bSi_{1p}\bY_n-z\bI\)^{-1}\bY_n^T\bu_k\)^{-1}.
\end{align*}
Now, by applying the residue theorem and combining it with Lemma \ref{thmspike}, we establish the following result.
\begin{thm}
	Under assumptions in Lemma \ref{thmspike}, we have 
$$\mathcal{I}_k^2-\frac{\mathcal{G}_{2n}'(\alpha_k)}{\mathcal{G}_{2n}(\alpha_k)/\alpha_k} \to 0, a.s..$$	 
\end{thm}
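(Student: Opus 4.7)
The plan is to evaluate the Cauchy integral
$\mathcal{I}_k^2 = -\frac{1}{2\pi i}\oint_{\zeta_k} \mb v_k^T \Upsilon(\bS_n,z)\mb v_k\,dz$
(already set up in the excerpt) by replacing the integrand with its almost-sure deterministic equivalent and then computing the resulting integral by residues. Starting from the explicit representation derived just before the statement, and rewriting $(\bY_n^T\bSi_{1p}\bY_n - z\bI)^{-1} = -(z\bI - \bY_n^T\bSi_{1p}\bY_n)^{-1}$, the integrand takes the form
\begin{align*}
\mb v_k^T \Upsilon(\bS_n,z)\mb v_k = -\frac{1}{\alpha_k z\bigl(1/\alpha_k - \bu_k^T\bY_n(z\bI - \bY_n^T\bSi_{1p}\bY_n)^{-1}\bY_n^T\bu_k\bigr)},
\end{align*}
where $\bu_k$ is the column of $\bU_1$ corresponding to $\mb v_k$.

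Next I would invoke Theorem \ref{thmlsd} (the first-order companion of the CLT used in Lemma \ref{thmspike}, applied via the same $\tbG = \bG+\varepsilon\bI_p$ perturbation and $\varepsilon\to 0$ limit) to conclude
\begin{align*}
\bu_k^T\bY_n\bigl(z\bI - \bY_n^T\bSi_{1p}\bY_n\bigr)^{-1}\bY_n^T\bu_k + g_{2n}^0(z) \to 0, \quad \text{a.s.,}
\end{align*}
uniformly on any compact set bounded away from the limiting bulk. Combined with $\lambda_k \to \mathcal{G}_{2n}(\alpha_k)$ from Theorem \ref{spike} and the separation condition $\min_{j\ne k}|\alpha_k/\alpha_j - 1| > d$, I can deform $\zeta_k$ to a fixed small positively oriented contour around the deterministic point $\mathcal{G}_{2n}(\alpha_k)$ which, almost surely for all large $n$, still encircles $\lambda_k$ and avoids every other sample eigenvalue. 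Passing the limit inside the integral then gives
\begin{align*}
\mathcal{I}_k^2 = \frac{1}{2\pi i}\oint_{\zeta_k} \frac{dz}{\alpha_k z\bigl(1/\alpha_k + g_{2n}^0(z)\bigr)} + o(1), \quad \text{a.s.}
\end{align*}

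The residue theorem finishes the calculation. The unique pole inside the contour is at $z_0 = \mathcal{G}_{2n}(\alpha_k)$, characterized by $g_{2n}^0(z_0) = -1/\alpha_k$. Differentiating the defining identity $g_{2n}^0(\mathcal{G}_{2n}(\alpha)) = -\alpha^{-1}$ at $\alpha = \alpha_k$ yields $(g_{2n}^0)'(z_0) = 1/\bigl(\alpha_k^2\,\mathcal{G}_{2n}'(\alpha_k)\bigr)$, which is finite and nonzero by $\mathcal{G}_{2n}'(\alpha_k) > 0$, so the pole is simple. Its residue is
\begin{align*}
\frac{1}{\alpha_k z_0 (g_{2n}^0)'(z_0)} = \frac{\alpha_k\,\mathcal{G}_{2n}'(\alpha_k)}{\mathcal{G}_{2n}(\alpha_k)} = \frac{\mathcal{G}_{2n}'(\alpha_k)}{\mathcal{G}_{2n}(\alpha_k)/\alpha_k},
\end{align*}
which is exactly the stated limit for $\mathcal{I}_k^2$.

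The main technical obstacle is upgrading the pointwise a.s. convergence supplied by Theorem \ref{thmlsd} to uniform a.s. convergence on the contour, since only the uniform statement justifies passing the limit inside $\oint_{\zeta_k}$. The standard remedy is a normal-family argument: the bilinear form $z\mapsto \bu_k^T\bY_n(z\bI - \bY_n^T\bSi_{1p}\bY_n)^{-1}\bY_n^T\bu_k$ is analytic off the sample spectrum, its deterministic counterpart $-g_{2n}^0(z)$ is analytic and uniformly bounded on the chosen contour, and pointwise a.s. convergence on a countable dense subset combined with local uniform boundedness upgrades to locally uniform a.s. convergence via Montel's theorem. The auxiliary step of ensuring that the random eigenvalue $\lambda_k$ lies inside the fixed deterministic contour with probability one eventually, so that the Cauchy representation genuinely equals $\mathcal{I}_k^2$ for all large $n$, is the only other delicate point and is handled by Theorem \ref{spike} together with the separation assumption.
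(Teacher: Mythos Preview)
Your proposal is correct and follows essentially the same route as the paper: the paper's entire proof is the one sentence ``by applying the residue theorem and combining it with Lemma~\ref{thmspike}'', and you have filled in precisely those details---replacing the bilinear form by its deterministic equivalent $-g_{2n}^0(z)$ via the $\tbG$-perturbation argument, then computing the residue at $z_0=\mathcal G_{2n}(\alpha_k)$. One small remark: for the almost-sure statement you correctly appeal to Theorem~\ref{thmlsd} (the first-order law) rather than to the CLT in Lemma~\ref{thmspike}, which is the more precise citation for an a.s.\ limit; the uniform-on-contour upgrade via normal families and the contour localization via Theorem~\ref{spike} that you flag are indeed the only technical points, and both are standard.
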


We observe a notable phenomenon: the asymptotic properties of spiked sample eigenvectors remain consistent with the Gaussian case as long as $\nu_p=o(p^2)$. Under this condition, the almost sure limit of $\mathcal{I}_k^2$ is given by   $$\({1-c\int \frac{t^2}{(\alpha_k-t)^2}dH_{1}(t)}\)\({1+c\int \frac{t}{(\alpha_k-t)}dH_{1}(t)}\)^{-1},$$
which tends to zero as $\alpha_k\to 0$ and converges to a positive constant otherwise. This observation aligns with the understanding that, asymptotically, a bias angle will emerge between the true principal component and the estimated one in high dimensions unless the true principal component is divergent.

\begin{rem}
The above focused approach enables us to examine the angles between sample and population eigenvectors, shedding light on the asymptotic properties within various spiked model frameworks. By considering the alignment between these vectors, we gain insights into how the sample and population eigenvectors behave in the presence of a dominant signal, offering a nuanced understanding of their statistical properties. Certainly, exploring the more general case with multiple population spiked eigenvalues and the fluctuation of $\mathcal{I}_k$ is an intriguing avenue for extension. This extension may involve more complex but traceable calculations, and we leave it as a potential direction for future research. In light of Lemma \ref{thmspike} and the discussion above, we posit that the theoretical findings in \cite{Bao2022}, which investigate the fluctuations of $\mathcal{I}_k$ under ICS, apply to a wider spectrum of elliptical distributions by setting the fourth moment to $3$, provided that $\nu_p=o(p^2)$.
\end{rem}

We conclude this section by presenting numerical simulations to validate the correctness of our theoretical results.

Let's start with the simulation results for the spiked eigenvalues.
Consider two cases: $p=50, n=100$ and $p=200,n=400$. Set the corresponding population covariance matrix as $\bSi=\bU_0\bD_0\bU_0^*=8\bu_{0,1}\bu_{0,1}^T+\sum_{j=2}^{p}d_j\bu_{0,j}\bu_{0,j}^T,$
where $\bD={\rm Diag}\(8,d_2,\cdots,d_p\)$ with $d_j$'s i.i.d. chosen from $U(0,1)$ and $\bU_0=\(\bu_{0,1},\cdots,\bu_{0,p}\)$ is the eigenmatrix of the Toeplitz matrix $\bA=(a_{i,j})$ where $a_{i,j}=0.9^{|i-j|}$ for $i,j=1,\cdots,p.$ It can be observed that the population matrix has a spiked population eigenvalue of 8. For each pair of $p$ and $n$, we draw $n$ i.i.d. samples from an elliptical distribution with the given p-dimensional population covariance matrix $\bSi$. We consider four different types of elliptical distributions where $(a):\nu_p=p^2$, $(b):\nu_p=p$, $(c):\nu_p=p^{1/2}$ and $(d):\nu_p=0$. For each sample, we compute the largest sample eigenvalue and repeat this procedure 10,000 times.

The following figures depict the agreement between the empirical distribution of the sample spiked eigenvalues and Gaussian distributions. We also label the sample mean and sample variance under each situation.
\begin{figure}[htbp]
	\centerline{\includegraphics[height = 3.5 in,width = 5.5in]{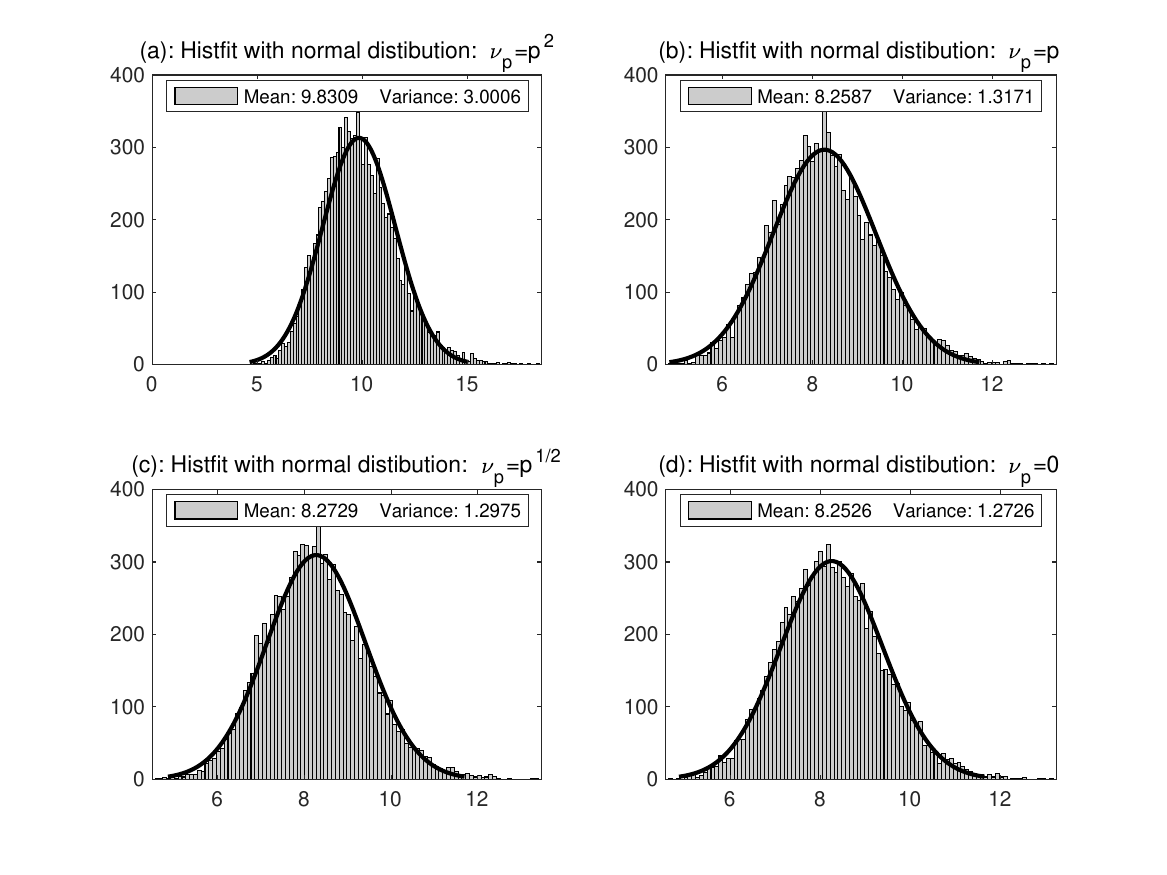}}	
\caption{Simulation results for the empirical distribution of spiked eigenvalues under different elliptical distributions. The dimension and sample size are $p=50$ and $n=100$. The x-axis represents the empirical sample spiked eigenvalues, while the y-axis represents the density. The black curve corresponds to Gaussian distribution with corresponding parameters given by the labels.}
\end{figure}

\begin{figure}[htbp]
	\centerline{\includegraphics[height = 3.5 in,width = 5.5in]{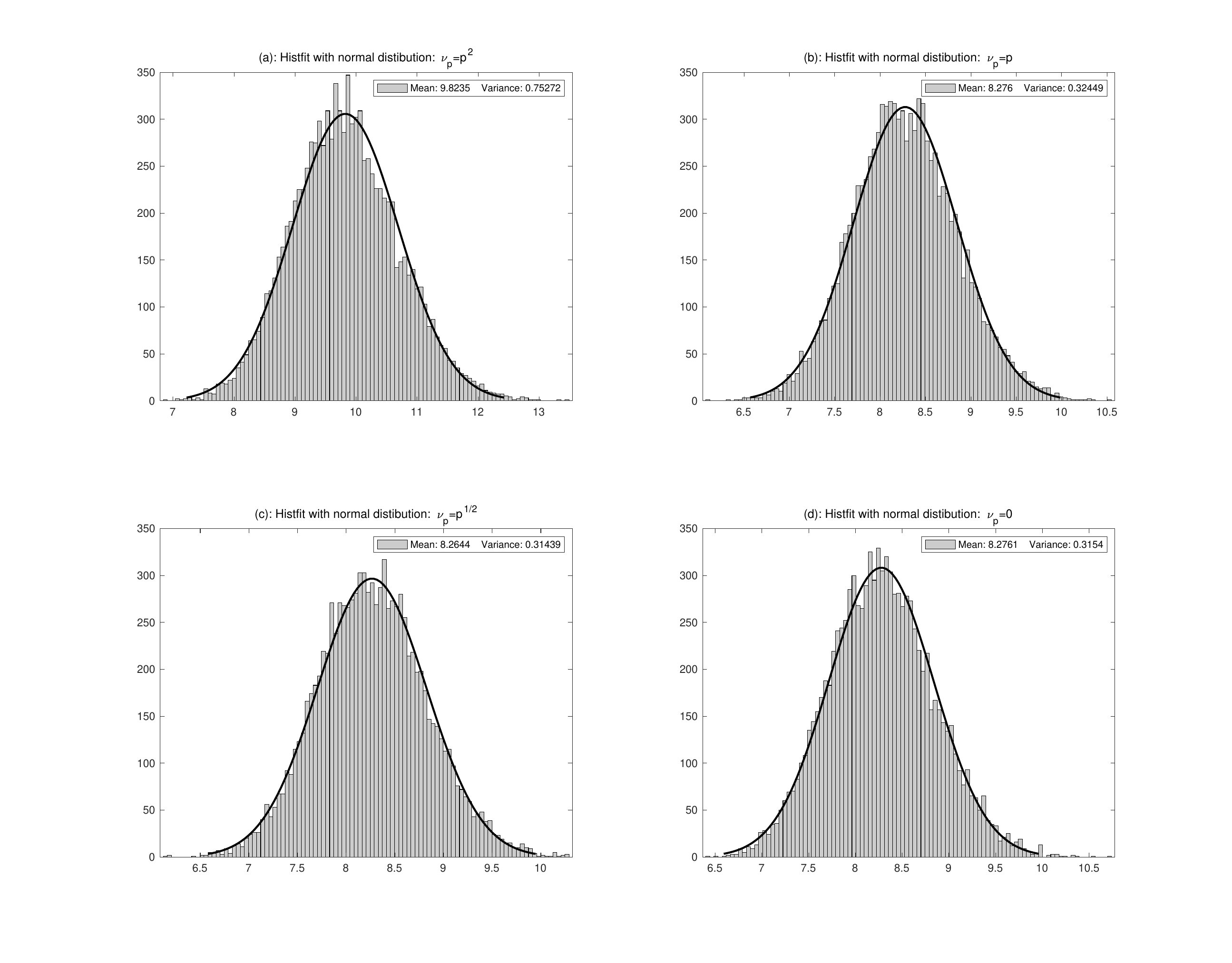}}	
\caption{Simulation results for the empirical distribution of spiked eigenvalues under different elliptical distributions. The dimension and sample size are $p=200$ and $n=400$. The x-axis represents the empirical sample spiked eigenvalues, while the y-axis represents the density. The black curve corresponds to Gaussian distribution with corresponding parameters given by the labels.}
\end{figure}
Two key observations can be drawn from Fig.1 and Fig.2.
Firstly, both figures demonstrate the good normality of empirical spiked sample eigenvalues under all cases. This suggests that the sample spiked eigenvalues exhibit properties akin to a normal distribution in various scenarios.
Secondly, a comparison between the two figures indicates that as $p\to\infty$, the asymptotic properties of sample spiked eigenvalues remain consistent as long as $\nu_p=o(p^2)$, aligning with our theoretical results. This transition in behavior is a noteworthy phenomenon in high-dimensional statistics.

In the subsequent analysis, we delve into simulations centered on spiked eigenvectors, maintaining the same population covariance matrix settings as in previous investigations. We specifically explore two scenarios with distinct dimension-to-sample size ratios: $c_{n,1} = p/n = 0.5$ and $c_{n,2} = p/n = 2$. Our exploration spans the range of $p$ from 64 to 512 in increments of 32. For each combination of $p$ and $n$, we draw $n$ samples from four distinct elliptical distributions, as previously considered. Subsequently, we compute the inner product of the population spiked eigenvector $\bu_{0,1}$ with its sample counterpart. This process is iterated 5000 times, and we compute the average under each distribution.

The resulting averages are then examined in terms of scatter plots against the varying values of $p$ for the different dimension-to-sample size ratios. Specifically, the scatter plots are presented in Fig.3 and Fig.4, providing a visual representation of how the average inner product behaves as the dimension $p$ varies.     
\begin{figure}[htbp]
	\centerline{\includegraphics[height = 2.5 in,width = 5.5in]{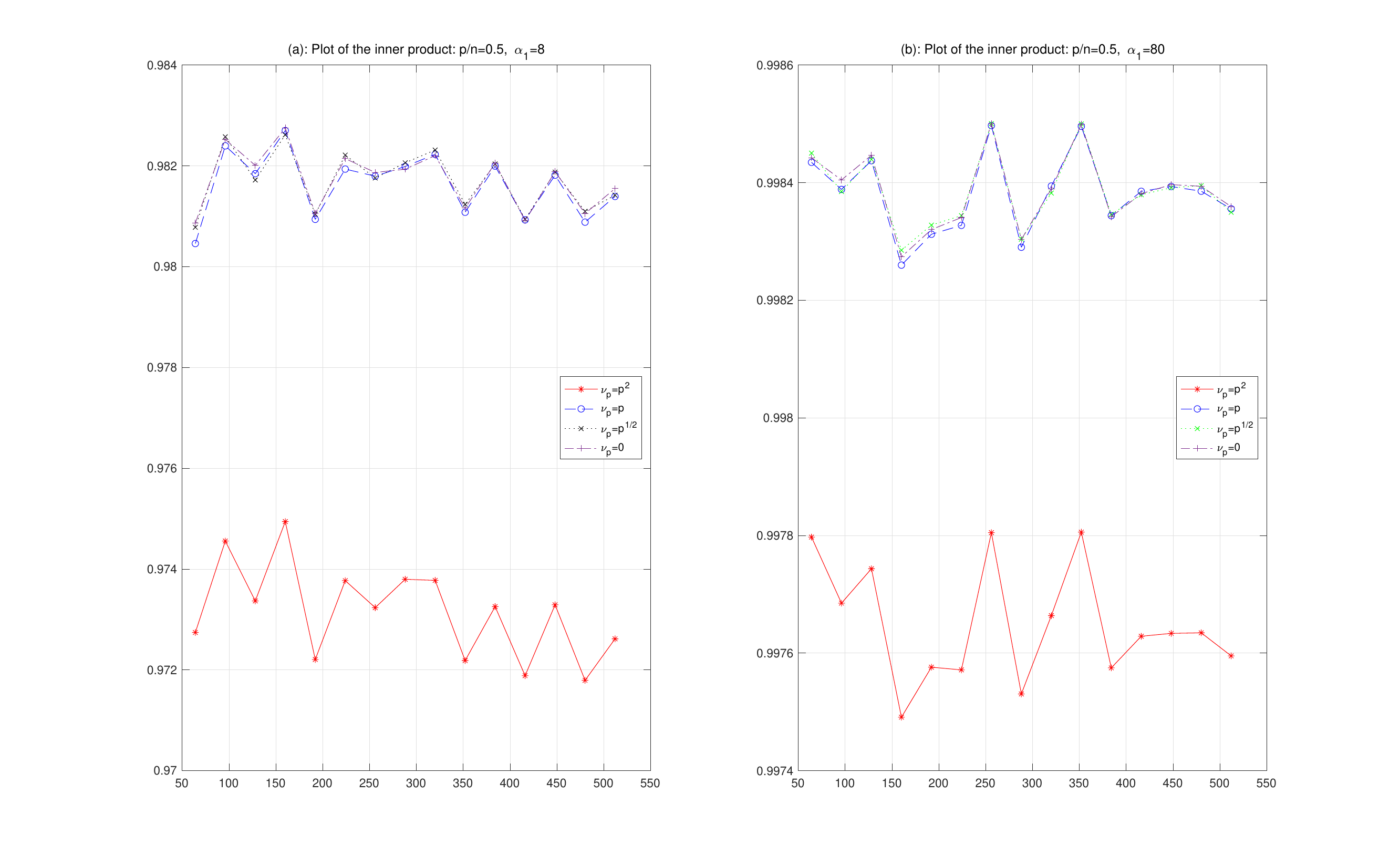}}	
\caption{Graphs depicting simulation results illustrate the averages of empirical inner products between the population's spiked eigenvector, denoted as $\mathbf{u}_{0,1}$, and its corresponding sample counterpart. The dimensionality $p$ ranges from 64 to 512 in increments of 32, maintaining a ratio of $p/n=0.5$. The x-axis denotes the dimension, while the y-axis represents the empirical inner product.}
\end{figure}

\begin{figure}[htbp]
	\centerline{\includegraphics[height = 2.5 in,width = 5.5in]{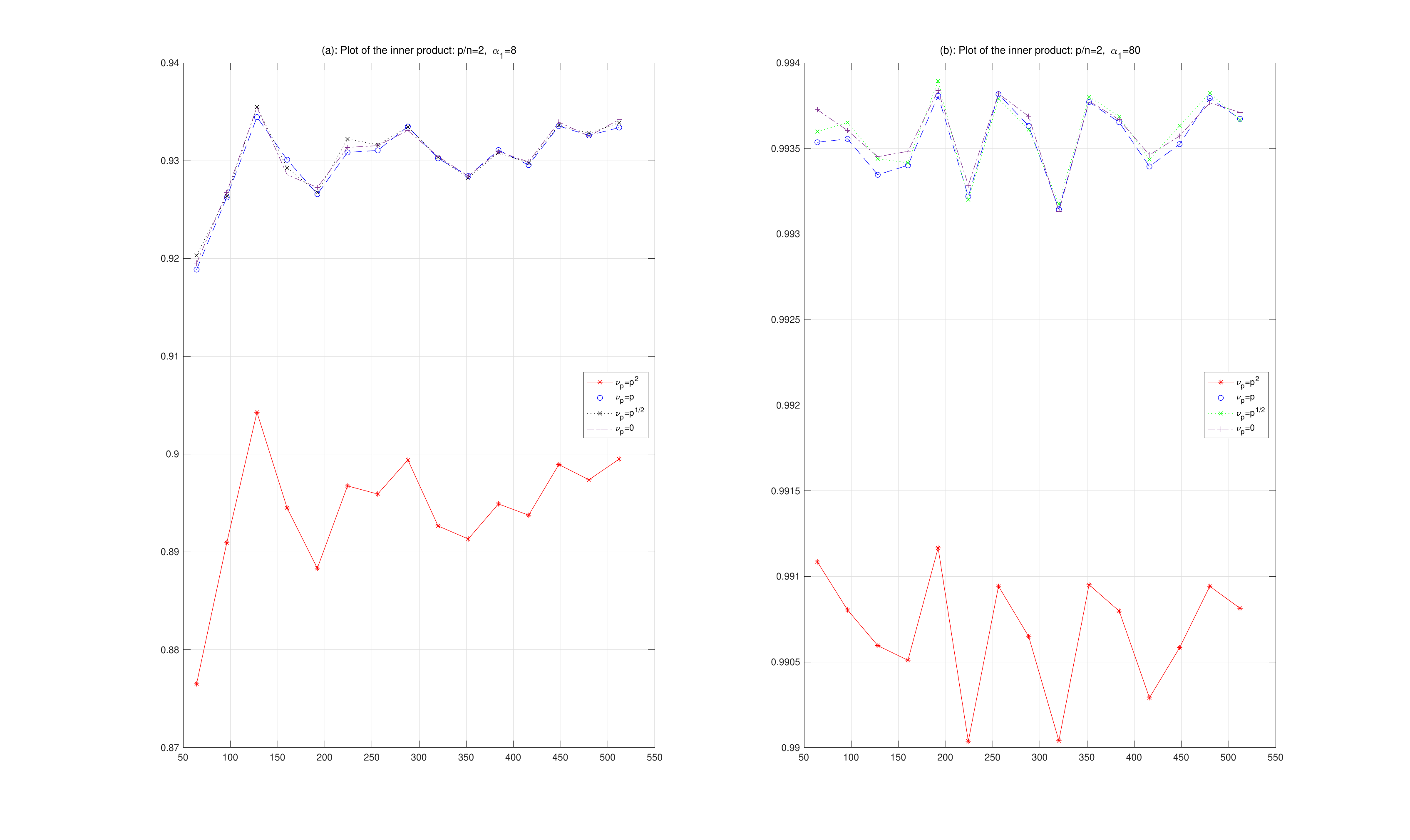}}	
\caption{Graphs depicting simulation results illustrate the averages of empirical inner products between the population's spiked eigenvector, denoted as $\mathbf{u}_{0,1}$, and its corresponding sample counterpart. The dimensionality $p$ ranges from 64 to 512 in increments of 32, maintaining a ratio of $p/n=2$. The x-axis denotes the dimension, while the y-axis represents the empirical inner product.}
\end{figure}

The figures reveal that, as the dimensionality $p$ increases, the asymptotic properties of the sample spiked eigenvector remain consistent, provided that $\nu_p = o(p^2)$, aligning with our theoretical results. However, when $\nu_p = p^2$, the inner product between the population's spiked eigenvector and its corresponding sample counterpart converges to a different value compared to the cases where $\nu_p = o(p^2)$. An interesting observation emerges: in our setting, a higher divergence rate of $\nu_p$ consistently leads to a larger angle between the population's spiked eigenvector and its corresponding sample counterpart.

\section{Concluding discussion} In this paper, we rigorously establish a joint CLT for bilinear forms, specifically those related to the resolvent of a random covariance matrix under ECS. Our analysis reveals a phase transition phenomenon, adding a nuanced dimension to our understanding of elliptical distribution.  Furthermore, we emphasize the practical importance of this CLT by showcasing its efficacy in exploring eigenvector statistics and the spiked model.  Through our investigation, we unveil consistent limiting properties for spiked eigenvalues and eigenvectors across a diverse set of elliptical distributions. This discovery underscores the robustness and adaptability of statistical tools originally designed for the spiked model under a Gaussian distribution.
For a detailed exposition of our results and proofs, we defer the reader to the appendix following our concluding discussion. Our work contributes to advancing the understanding of statistical properties in the context of random covariance matrices, particularly under ECS. 

\appendix
\section{Proof of the main theorems}
\subsection{Some definitions and preliminary results}
We initiate our proof by introducing crucial notations and some preliminary results. Let  $\br_k=\sqrt{c}_n\bGa_n{\bu}_k,$ and $\xi_k=m_p^{-1/4}\rho_k$. The matrices $\bA(z)$, $\bA_k(z)$, and $\bA_{k j}(z)$ are defined as follows:
$$
\bA(z)=\sum_{k=1}^{n}\xi_k^2\br_k\br_k^{T}-z\bI_p,\ \ \bA_k(z)=\bA(z)-\xi_k^2\br_k\br_k^{T},\ \ \bA_{k j}(z)=\bA_k(z)-\xi_j^2\br_j\br_j^{T}.
$$
Additionally, we introduce the matrices $\breve{\bA}_k(z)$ and $\breve{\bA}_{kj}(z)$, where $$\breve{\bA}_k(z)=
\sum\limits_{j<k}\xi_j^2\br_j\br_j^T+\sum\limits_{j>k}\breve\xi_j^2\breve{\br}_j\breve{\br}_j^{T}-z\bI_p, \ \breve{\bA}_{kj}(z)=\begin{cases}
	\breve{\bA}_k(z)-\xi_j^2\br_j\br_j^T,\quad j<k,\\
	\breve{\bA}_k(z)-\breve\xi_j^2\breve{\br}_j\breve{\br}_j^{T},\quad j>k.
\end{cases}$$
Here,$\breve\xi_k\breve{\br}_{k+1},\ldots,\breve\xi_n\breve{\br}_n$ are independent copies of $\xi_k{\br}_{k+1},\ldots,\xi_n{\br}_n$.
The conditional expectation given the samples $\xi_1\bx_1,\xi_2\bx_{2},\ldots,\xi_k\bx_k$ is denoted as $\E_k$. Moreover, we introduce
\begin{align}\label{nota}
\begin{split}
	\beta_k(z)=&\frac{1}{1+\xi_k^2\br_k^{T}\bA_k^{-1}(z)\br_k},\ b_k(z)=\frac{1}{1+\frac{\xi_k^2}n\rtr\(\bA_k^{-1}(z)\bSi_n\)}, \ 	\psi_k(z)=\frac{1}{1+{\xi_k^2} g_{1n}^0(z)}, 
	\\
	\beta_{kj}(z)=&\frac{1}{1+\xi_j^2\br_j^{T}\bA_{kj}^{-1}(z)\br_j}, \ \phi_k(z)=\frac{1}{1+\frac{\xi_k^2}n \E\rtr\(\bA^{-1}(z)\bSi_n\)},\ \breve{\psi}_{k}(z)=\frac{1}{1+{\breve\xi_k^2}g_{1n}^0(z)},
\end{split}
\end{align}
$${\gamma}_k(z)=\br_k^{T}\bA_k^{-1}(z)\br_k-\frac{1}{n}\rtr\(\bA_k^{-1}(z)\bSi_n\),\ {\eta}_k(z)=\br_k^{T}\bA_k^{-1}(z)\br_k-\frac{1}{n}\E\rtr\(\bA^{-1}(z)\bSi_n\),$$
$$\varepsilon_{k1}(z)=\br_{k}^{T} \mb A_{k}^{-1}(z) \pi_{n2} {\pi}_{n1}^{T}\mb A_{k}^{-1}(z) \br_k-\frac{1}{n}\pi_{n1}^{T} \mb A_{k}^{-1}(z) \bSi_n \mb A_{k}^{-1}(z) \pi_{n2},
$$
$$\varepsilon_{k2}(z)=\br_{k}^{T} \mb A_{k}^{-1}(z) \pi_{n4} {\pi}_{n3}^{T}\mb A_{k}^{-1}(z) \br_k-\frac{1}{n}\pi_{n3}^{T} \mb A_{k}^{-1}(z) \bSi_n \mb A_{k}^{-1}(z) \pi_{n4},
$$

The validity of the following inequality in the appropriate domain  $z\in \mathcal{Z}$ can be easily established as in \cite{BaiS04C,zhangzheng2022}:
\begin{equation}\label{b:bdd}
	\max(|b_k(z)|,|\phi_{k}(z)|, |\psi_{k}(z)|, |\breve\psi_{k}(z)|, |\beta_{kj}(z)|, |\beta_k(z)|)\leq C.
\end{equation}
Given that the support of $H_2$ is bounded, it follows that $\E\(\xi_1^{2q}\)\le C_q$. Employing the martingale difference decomposition method, Lemma \ref{quadratic}, and Lemma \ref{burk2}, the following inequality is obtained:
\begin{align}\label{decom1}
	&\E\left|\rtr\(\bA^{-1}(z)\bSi_n\)-\E\rtr\(\bA^{-1}(z)\bSi_n\)\right|^q\notag\\
=&\E\left|\sum_{k=1}^n\(\E_k-\E_{k-1}\)\beta_{k}(z)\xi_k^2\br_k^T\bA_k^{-1}(z)\bSi_n\bA_k^{-1}(z)\br_k\right|^q
\le C_qn^{q/2}.
\end{align}
This implies
\begin{align*}
\E\left|\eta_k(z)-\gamma_k(z)\right|^q\le	&\frac{C_q}{n^q}\E\left|\rtr\(\bA_k^{-1}(z)\bSi_n\)-\rtr\(\bA^{-1}(z)\bSi_n\)\right|^q\\
	&\qquad+\frac{C_q}{n^q}\E\left|\rtr\(\bA^{-1}(z)\bSi_n\)-\E\rtr\(\bA^{-1}(z)\bSi_n\)\right|^q\\
	=& \frac{C_q}{n^q}\E\left|\beta_{k}(z)\xi_k^2\br_k^T\bA_k^{-1}(z)\bSi_n\bA_k^{-1}(z)\br_k\right|^q+\frac{C_q}{n^{q/2}}
	\le{C_qn^{-q/2}}.
\end{align*}
Therefore,
\begin{align}\label{etab}
	\E\left|\eta_k(z)\right|^q
	\le&C_qn^{-q/2}.
\end{align}

\subsection{Proof of Theorem \ref{thmlsd}}
We proceed with the proof by establishing the almost sure convergence of the random part:
\begin{align*}
	\pi_{n1}^T\bA^{-1}(z)\pi_{n2}\xrightarrow{a.s.} m_{12}(z).
\end{align*}
This can be divided into two parts for comprehensive demonstration:
\begin{itemize}
	\item [(a)]: For the random part $\pi_{n1}^T\bA^{-1}(z)\pi_{n2}-\E\(\pi_{n1}^T\bA^{-1}(z)\pi_{n2}\)\xrightarrow{a.s.}0$,
	\item [(b)]: For the nonrandom part $\E\(\pi_{n1}^T\bA^{-1}(z)\pi_{n2}\)\rightarrow m_{12}(z)$.
\end{itemize}
\subsubsection{Almost sure convergence of the random part}\label{decomposition}
In this section, we aim to demonstrate the almost sure convergence of the random part:
\begin{align*}
\pi_{n1}^T\bA^{-1}(z)\pi_{n2}-\E\(\pi_{n1}^T\bA^{-1}(z)\pi_{n2}\)\xrightarrow{a.s.}0.
\end{align*}

Let $\E_0$ represent the unconditional expectation. Utilizing the inversion formula
\begin{align}\label{inver}
	\(\bA+\mb{\alpha\beta}^T\)^{-1}=\bA^{-1}-\frac1{1+\mb\beta^T\bA^{-1}\mb\alpha}\bA^{-1}\mb{\alpha\beta}^T\bA^{-1},
\end{align}
we obtain
\begin{align}\label{decom}
	\begin{split}
	\pi_{n1}^T\bA^{-1}(z)\pi_{n2}-&\E\(\pi_{n1}^T\bA^{-1}(z)\pi_{n2}\)
	=\sum_{k=1}^n\(\E_k-\E_{k-1}\)\pi_{n1}^T\(\bA^{-1}(z)-\bA_k^{-1}(z)\)\pi_{n2}\\
	=&-\sum_{k=1}^n\(\E_k-\E_{k-1}\)\beta_k(z)\xi_k^2\pi_{n1}^T\bA_k^{-1}(z)\br_k\br_k^T\bA_k^{-1}(z)\pi_{n2}.
	\end{split}
\end{align}
Notice that, from Lemma \ref{burk}, we have
\begin{align*}
	&\E\left|\sum_{k=1}^n\(\E_k-\E_{k-1}\)\beta_k(z)\xi_k^2\pi_{n1}^T\bA_k^{-1}(z)\br_k\br_k^T\bA_k^{-1}(z)\pi_{n2}\right|^4\\
	\le&C\E\(\sum_{k=1}^n\E_{k-1}\left|\pi_{n1}^T\bA_k^{-1}(z)\br_k\br_k^T\bA_k^{-1}(z)\pi_{n2}\right|^2\)^2\\
	&+C\delta_n^2p\sum_{k=1}^n\E\left|\pi_{n1}^T\bA_k^{-1}(z)\br_k\br_k^T\bA_k^{-1}(z)\pi_{n2}\right|^4\le \frac{C}{n^2}.
\end{align*}
This implies
$
	\pi_{n1}^T\bA^{-1}(z)\pi_{n2}-\E\(\pi_{n1}^T\bA^{-1}(z)\pi_{n2}\)\xrightarrow{a.s.}0.
$
\subsubsection{Convergence of $\E\(\pi_{n1}^T\bA^{-1}(z)\pi_{n2}\)$}

Denote $\mathbb{H}_n(z)=\E\(\xi_1^2\psi_1^2\)\bSi_n-z\bI_p=-z\(g_{2n}^0(z)\bSi_n+\bI_p\)$. Then
$
	\bA(z)-\mathbb{H}_n(z)=\sum_{k=1}^n\xi_k^2\br_k\br_k^T-\E\(\xi_1^2\psi_1^2\)\bSi_n.
$
Using \eqref{inver} and 
\begin{align}\label{beta2}
	\beta_k(z)=\phi_k(z)-\beta_k(z)\phi_k(z)\xi_k^2\eta_k(z),
\end{align}
we have
\begin{align}\label{expe}
	\begin{split}
	&\E\pi_{n1}^T\bA^{-1}(z)\pi_{n2}-\pi_{n1}^T\mathbb{H}_n(z)\pi_{n2}\\
=&-\E\pi_{n1}^T\mathbb{H}_n^{-1}(z)\(\sum_{k=1}^n\xi_k^2\br_k\br_k^T-\E\(\xi_1^2\psi_1^2\)\bSi_n\)\bA^{-1}(z)\pi_{n2}\\
=&-\frac1n\sum_{k=1}^n\E\xi_k^2\(\phi_k(z)-\psi_k(z)\) \pi_{n1}^T\mathbb{H}_n^{-1}(z)\bSi_n\E\bA_k^{-1}(z)\pi_{n2}\\
&+\sum_{k=1}^n\E\beta_k(z)\phi_k(z){\eta}_k(z)\xi_k^4\pi_{n1}^T\mathbb{H}_n^{-1}(z)\br_k\br_k^T\bA_k^{-1}(z)\pi_{n2}\\
&-\frac1n\sum_{k=1}^n\E\(\xi_1^2\psi_1^2\)\E\pi_{n1}^T\mathbb{H}_n^{-1}(z)\bSi_n\(\bA^{-1}_k(z)-\bA^{-1}(z)\)\pi_{n2}.\end{split}
\end{align}

Applying Lemma \ref{quadratic} and \eqref{etab}, we have
\begin{align*}
&\left|\E\pi_{n1}^T\bA^{-1}(z)\pi_{n2}-\pi_{n1}^T\(\E\(\xi_1^2\psi_1^2\)\bSi_n-z\bI_p\)^{-1}\pi_{n2}\right|\\
\le& C\left|\frac1n\E\rtr\(\bA^{-1}(z)\bSi_n\)-g_{1n}^0(z)\right|^2+C\sum_{k=1}^n\E^{1/2}\left|{\eta}_k(z)\right|^2+\frac Cn
=o(1).
\end{align*}
Using
$
	\E\(\xi_1^2\psi_1^2\)=\int\frac{x}{1+xg_{1n}^0(z)}dH_{2n}(x)=-zg_{2n}^0(z),
$ we obtain
\begin{align*}
	\E\pi_{n1}^T\bA^{-1}(z)\pi_{n2}+z^{-1}\pi_{n1}^T\(\bI_p+g_{2n}^0(z)\bSi_n\)^{-1}\pi_{n2}\to 0.
\end{align*}
This completes the proof of Theorem \ref{thmlsd}.

\subsection{The proof of Theorem \ref{thclt}}
In establishing the theorem, we adopt a methodology akin to the classical procedure developed in \cite{BaiM07A} under ICS. This involves undertaking a martingale difference decomposition followed by the application of the CLT for martingales. Notably, our approach draws inspiration from the work in \cite{BaiM07A,PanZ08C}, but with a significant departure as we streamline the intricate proofs substantially. This is achieved through the judicious use of the replacement of samples strategy. We posit that our simplified approach holds intrinsic interest in its own right. 

By virtue of the property of $\bu_j$, we rewrite $\bu_j=\by_j/\|\by_j\|$, $\bY_n=(\by_1,\cdots,\by_n)$,
\begin{align*}
	\bX_n=\bY_n\mathrm{diag}\left(\frac{\rho_1}{\|\by_1\|},\cdots,\frac{\rho_n}{\|\by_n\|}\right),\quad \bS_n=\frac1n\bGa_n\bY_n\mathrm{diag}\left(\frac{p\xi_1^2}{\|\by_1\|^2},\cdots,\frac{p\xi_n^2}{\|\by_n\|^2}\right)\bY_n^T\bGa_n^T
\end{align*}
where $\by_j\sim N({\bf 0}_p,\bI_p)$. 
Let $M_{n1}(z)=M_{n1}^1(z)+M_{n1}^2(z)$ and $M_{n2}(z)=M_{n2}^1(z)+M_{n2}^2(z)$, 
where
\begin{align*}
	M_{n1}^1(z)=&\sqrt p\left(\pi_{n1}^T\bA^{-1}(z)\pi_{n2}-\E\pi_{n1}^T\bA^{-1}(z)\pi_{n2}\right),\\
	M_{n2}^1(z)=&\sqrt p\left(\pi_{n3}^T\bA^{-1}(z)\pi_{n4}-\E\pi_{n3}^T\bA^{-1}(z)\pi_{n4}\right),\\
	M_{n1}^2(z)=&\sqrt p\left(\E\pi_{n1}^T\bA^{-1}(z)\pi_{n2}+z^{-1}\pi_{n1}^T
	(\bI_p+g_{2n}^0(z)\bSi_n)^{-1}\pi_{n2}\right),\\
	M_{n2}^2(z)=&\sqrt p\left(\E\pi_{n3}^T\bA^{-1}(z)\pi_{n4}+z^{-1}\pi_{n3}^T
	(\bI_p+g_{2n}^0(z)\bSi_n)^{-1}\pi_{n4}\right).
\end{align*}
Then the outline of the proof is as follows:
\begin{itemize}
	\item [(a)]: $\begin{pmatrix}
		M_{n1}^1(z),\ 
		M_{n2}^1(z)
	\end{pmatrix}^T$ converges weakly to a Gaussian process $M(z)$;
\item [(b)]: $\{M_{n1}(z)\}$ and $\{M_{n2}(z)\}$ both form a tight sequence on $\mathcal{Z}$;
\item [(c)]: $M_{n1}^2(z)$ and $M_{n2}^2(z)$ tend to zero for $z\in \mathcal{Z}$.
\end{itemize}
In the subsequent sections, we will systematically follow the outlined plan, proceeding step by step.

\subsubsection{Convergence in finite dimensions}
In this section, we aim to establish the convergence in distribution of the sum
\begin{align*}
\sum_{\ell=1}^2\sum_{j=1}^r\alpha_{j\ell}M_{n\ell}^1(z_j)
\end{align*}
for any positive integer $r$ and complex numbers $a_{j\ell}$, where $j=1,2$, and $\ell=1,\cdots,r$. This sum converges to a Gaussian random variable.

From \eqref{decom} and 
\begin{equation}\label{beta}
	\beta_k(z)=b_k(z)-\xi_k^2\beta_k(z)b_k(z){\gamma}_k(z),
\end{equation} 
it follows that
\begin{align*}
	&\sqrt p\left[\pi_{n1}^T\bA^{-1}(z)\pi_{n2}-\E\(\pi_{n1}^T\bA^{-1}(z)\pi_{n2}\)
	\right]\\
	=&-\sqrt p\sum_{k=1}^n\(\E_k-\E_{k-1}\)\xi_k^2b_k(z)\varepsilon_k(z)+\sqrt p\sum_{k=1}^n\(\E_k-\E_{k-1}\)\xi_k^4\beta_k(z)b_k(z){\gamma}_k(z)\varepsilon_k(z)\\
	&\qquad-\frac{\sqrt p}n\sum_{k=1}^n\(\E_k-\E_{k-1}\)\xi_k^2b_k(z)\pi_{n1}^T\bA_k^{-1}(z)\bSi_n\bA_k^{-1}(z)\pi_{n2}\\
	&\qquad+\frac{\sqrt p}n\sum_{k=1}^n\(\E_k-\E_{k-1}\)\xi_k^4\beta_k(z)b_k(z){\gamma}_k(z)\pi_{n1}^T\bA_k^{-1}(z)\bSi_n\bA_k^{-1}(z)\pi_{n2}.
\end{align*}
By computation and utilizing Lemma \ref{quadratic}, we obtain
\begin{align*}
	&\E\left|\frac{\sqrt p}n\sum_{k=1}^n\(\E_k-\E_{k-1}\)\xi_k^4\beta_k(z)b_k(z){\gamma}_k(z)\pi_{n1}^T\bA_k^{-1}(z)\bSi_n\bA_k^{-1}(z)\pi_{n2}\right|^2\\
\le&\frac{C}{n}\sum_{k=1}^n\E(\xi_k^8)\E\left|{\gamma}_k(z)\right|^2\le \frac{C}{n}.	
\end{align*}
and 
\begin{align*}
	&\E\left|\sqrt p\sum_{k=1}^n\(\E_k-\E_{k-1}\)\xi_k^4\beta_k(z)b_k(z){\gamma}_k(z)\varepsilon_k(z)\right|^2
\le Cp\sum_{k=1}^n\E^{1/2}\left|{\gamma}_k(z)\right|^4\E^{1/2}\left|\varepsilon_k(z)\right|^4
\le \frac{C}{n}.	
\end{align*}
Hence, we see
\begin{align*}
	&\sqrt p\left[\pi_{n1}^T\bA^{-1}(z)\pi_{n2}-\E\(\pi_{n1}^T\bA^{-1}(z)\pi_{n2}\)
	\right]
	=-\sqrt p\sum_{k=1}^n\(\E_k-\E_{k-1}\)\xi_k^2b_k(z)\varepsilon_{k1}(z)\\
	&\qquad-\frac{\sqrt p}n\sum_{k=1}^n\(\E_k-\E_{k-1}\)\xi_k^2b_k(z)\pi_{n1}^T\bA_k^{-1}(z)\bSi_n\bA_k^{-1}(z)\pi_{n2}+o_p(1).
\end{align*}
By \eqref{decom1} and \eqref{inver}, one finds
\begin{align}\label{al1}
	\E\left|b_k(z)-\psi_k(z)\right|^2
	\le&\frac C{n^2}\E\left|\xi_k^2\br_k^T\bA_k^{-1}(z)\bSi_n\bA_k^{-1}(z)\br_k\right|^2+o(1)=o(1), 
\end{align}
where used the fact that $n^{-1}\E\rtr\(\bA^{-1}(z)\bSi_n\)\to g_{1}(z)$. 
Therefore, we deduce from \eqref{al1}
\begin{align*}
	&\sqrt p\left[\pi_{n1}^T\bA^{-1}(z)\pi_{n2}-\E\(\pi_{n1}^T\bA^{-1}(z)\pi_{n2}\)
	\right]
	=-\sqrt p\sum_{k=1}^n\xi_k^2\psi_k(z)\E_k\varepsilon_{k1}(z)\\
	&\qquad-\frac{\sqrt p}n\sum_{k=1}^n\(\xi_k^2\psi_k(z)-\E\xi_k^2\psi_k(z)\)\E_k\pi_{n1}^T\bA_k^{-1}(z)\bSi_n\bA_k^{-1}(z)\pi_{n2}+o_p(1)\\
	&\quad\triangleq\sum_{k=1}^n Y_{k1}(z)+o_p(1).
\end{align*}
Applying the same procedure, it becomes evident
\begin{align*}
	&\sqrt p\left[\pi_{n3}^T\bA^{-1}(z)\pi_{n4}-\E\(\pi_{n3}^T\bA^{-1}(z)\pi_{n4}\)
	\right]
	=-\sqrt p\sum_{k=1}^n\xi_k^2\psi_k(z)\E_k\varepsilon_{k2}(z)\\
	&\qquad-\frac{\sqrt p}n\sum_{k=1}^n\(\xi_k^2\psi_k(z)-\E\xi_k^2\psi_k(z)\)\E_k\pi_{n3}^T\bA_k^{-1}(z)\bSi_n\bA_k^{-1}(z)\pi_{n4}+o_p(1)\\
	&\quad\triangleq\sum_{k=1}^n Y_{k2}(z)+o_p(1).
\end{align*}
Our next objective is to demonstrate that, for any positive integer $r > 0$, the sum
\begin{align*}
	\sum_{\ell=1}^2\sum_{j=1}^r\alpha_{j\ell}\sum_{k=1}^nY_{k\ell}(z_j)
\end{align*}
will converge in distribution to a Gaussian random variable. For any $$z_1,\ldots,z_r\in \mathbb C_+\quad \alpha_{11},\alpha_{12},\ldots,\alpha_{r1},\alpha_{r2}\in \mathbb R$$ and any $\varepsilon>0$, we have
\begin{align*}
	&\sum\limits_{k=1}^{n}\E\left(\left|\sum\limits_{{\ell}=1}^{2}\sum_{j=1}^r\alpha_{j\ell}Y_{k\ell}(z_j)\right|^2I\left(\left|\sum\limits_{{\ell}=1}^{2}\sum_{j=1}^r\alpha_{j\ell}Y_{k\ell}(z_j)\right|\geq\varepsilon\right)\right)\\
	\leq&\frac{ C}{\varepsilon^2}\sum\limits_{k=1}^{n}\sum\limits_{{\ell}=1}^{2}\sum_{j=1}^r\alpha_{j\ell}^4\E\left|Y_{k\ell}(z_j)\right|^4\to0
\end{align*}
where
$
	\E|Y_{k\ell}(z)|^4\le Cp\E\(\xi_k^8\)\E\left|\varepsilon_{k\ell}(z)\right|^4+\frac C{n^2}\E\(\xi_k^8\)\le \frac C{n^2}.
$
This implies the fulfillment of the Lindeberg condition for Lemma \ref{cltmar}.

 Then, we shall prove for $z_1,z_2\in\mathcal{Z}$,
\begin{align*}
&\sum_{k=1}^n\E_{k-1}\bigg[\bigg(\alpha_{11}Y_{k1}(z_1)+\alpha_{12}Y_{k2}(z_1)\bigg)\bigg(\alpha_{21}Y_{k1}(z_2)+\alpha_{22}Y_{k2}(z_2)\bigg)\bigg]\\
=&\alpha_{11}\alpha_{21}\sum_{k=1}^n\E_{k-1}\bigg(Y_{k1}(z_1)Y_{k1}(z_2)\bigg)+\alpha_{12}\alpha_{22}\sum_{k=1}^n\E_{k-1}\bigg(Y_{k2}(z_1)Y_{k2}(z_2)\bigg)\\
&+\alpha_{11}\alpha_{22}\sum_{k=1}^n\E_{k-1}\bigg(Y_{k1}(z_1)Y_{k2}(z_2)\bigg)+\alpha_{12}\alpha_{21}\sum_{k=1}^n\E_{k-1}\bigg(Y_{k2}(z_1)Y_{k1}(z_2)\bigg)
\end{align*}
tends to a constant in probability. We will now demonstrate the derivation of the limit for
$$ \sum_{k=1}^n\E_{k-1}\bigg(Y_{k1}(z_1)Y_{k2}(z_2)\bigg),$$ and the others follow a similar procedure.

To begin with, it is easy to get from Lemma \ref{quadeq} that
\begin{align*}
	&\sum_{k=1}^n\E_{k-1}\bigg(Y_{k1}(z_1)Y_{k2}(z_2)\bigg)\\
	=&\frac{c_nh_{n1}(z_1,z_2)}{n}\sum_{k=1}^n\E_k\(\pi_{n1}^T\bA_k^{-1}(z_1)\bSi_n\breve\bA_k^{-1}(z_2)\pi_{n4}\pi_{n3}^T\breve\bA_k^{-1}(z_2)\bSi_n\bA_k^{-1}(z_1)\pi_{n2}\)\\
	&+\frac{c_nh_{n1}(z_1,z_2)}{n}\sum_{k=1}^n\E_k\(\pi_{n1}^T\bA_k^{-1}(z_1)\bSi_n\breve\bA_k^{-1}(z_2)\pi_{n3}\pi_{n4}^T\breve\bA_k^{-1}(z_2)\bSi_n\bA_k^{-1}(z_1)\pi_{n2}\)\\
	&+\frac {ph_{n2}(z_1,z_2)}{n^2}\sum_{k=1}^n\E_k\pi_{n1}^T\bA_k^{-1}(z_1)\bSi_n\bA_k^{-1}(z_1)\pi_{n2}\E_k\pi_{n3}^T\bA_k^{-1}(z_2)\bSi_n\bA_k^{-1}(z_2)\pi_{n4}+o_p(1)\\
	=&\frac{c_nh_{n1}(z_1,z_2)}{n}\sum_{k=1}^n\E_k\(\pi_{n1}^T\bA_k^{-1}(z_1)\bSi_n\breve\bA_k^{-1}(z_2)\pi_{n4}\)\E_k\(\pi_{n3}^T\breve\bA_k^{-1}(z_2)\bSi_n\bA_k^{-1}(z_1)\pi_{n2}\)\\
	&+\frac{c_nh_{n1}(z_1,z_2)}{n}\sum_{k=1}^n\E_k\(\pi_{n1}^T\bA_k^{-1}(z_1)\bSi_n\breve\bA_k^{-1}(z_2)\pi_{n3}\)\E_k\(\pi_{n4}^T\breve\bA_k^{-1}(z_2)\bSi_n\bA_k^{-1}(z_1)\pi_{n2}\)\\
	&+\frac {c_nh_{n2}(z_1,z_2)}{n}\sum_{k=1}^n\E_k\(\pi_{n1}^T\bA_k^{-1}(z_1)\bSi_n\bA_k^{-1}(z_1)\pi_{n2}\)\E_k\(\pi_{n3}^T\bA_k^{-1}(z_2)\bSi_n\bA_k^{-1}(z_2)\pi_{n4}\)+o_p(1)\\
	\triangleq&\mathcal{I}_1+\mathcal{I}_2+\mathcal{I}_3+o_p(1)
\end{align*}
where the last equality is due to
\begin{align*}
	&\E\left|\pi_{n1}^T\(\bA_k^{-1}(z_1)-\E_k\bA_k^{-1}(z_1)\)\bSi_n\breve\bA_k^{-1}(z_2)\pi_{n4}\right|^2\\
=&\E\left|\sum_{j=k+1}^n\pi_{n1}^T\(\E_j-\E_{j-1}\)\(\bA_k^{-1}(z_1)-\bA_{kj}^{-1}(z_1)\)\bSi_n\breve\bA_k^{-1}(z_2)\pi_{n4}\right|^2
=O\(n^{-1}\)	,
\end{align*}
and
\begin{small}
\begin{align*}
h_{n1}(z_1,z_2)=&\int\frac{x^2dH_{2n}(x)}{(1+x g_{1n}^0(z_1))(1+x g_{1n}^0(z_2))}\to \frac{z_1g_2(z_1)-z_2g_2(z_2)}{g_1(z_1)-g_1(z_2)},\\
h_{n2}(z_1,z_2)=&h_{n1}(z_1,z_2)-\int\frac{xdH_{2n}(x)}{1+x g_{1n}^0(z_1)}\int\frac{xdH_{2n}(x)}{1+x g_{1n}^0(z_2)}\to z_1z_2\frac{\underline m(z_1)g_2(z_2)-\underline m(z_2)g_2(z_1)}{g_1(z_1)-g_1(z_2)}.
\end{align*}
\end{small}
Next, the matrix $\bA_k^{-1}(z)$ can be further decomposed as
\begin{equation}\label{Ajz}
	\bA_k^{-1}(z)=\mathbb{T}_n(z)+\bB_{k}(z)+\bC_{k}(z)+\bD_{k}(z)+\bF_k(z),
\end{equation}
where
$$\aligned
\mathbb{T}_n(z)&={-\left(z\bI-\frac{n-1}n\E\(\xi_1^2\psi_1(z)\)\cdot\bSi_n\right)^{-1}},\\
\bB_{k}(z)&=\sum\limits_{j\not=k}\xi_j^2\psi_j(z)
\mathbb{T}_n(z)
(\br_j\br_j^{T}-\frac1n\bSi_n)\bA_{kj}^{-1}(z),\\
\bC_{k}(z)&=\sum\limits_{j\not=k}(\beta_{kj}(z)-\psi_j(z))\xi_j^2
\mathbb{T}_n(z)
\br_j\br_j^{T}\bA_{kj}^{-1}(z),\\
\bD_{k}(z)&=\frac1n\sum\limits_{j\not=k}\(\xi_j^2\psi_j(z)-\E\xi_j^2\psi_j(z)\)
\mathbb{T}_n(z)
\bSi_n\bA_{kj}^{-1}(z),\quad\mbox{and}\\
\bF_k(z)&
=-\frac1n\E\(\xi_1^2\psi_1(z)\)\mathbb{T}_n(z)\sum\limits_{j\not=k}\beta_{jk}(z)\xi_j^2\bA_{kj}^{-1}(z)\br_j\br_j^T\bA_{kj}^{-1}(z).
\endaligned
$$
It is easy to obtain
\begin{align*}
&\E_k\(z_1\pi_{n1}^T\bA_k^{-1}(z_1)\pi_{n4}-z_2\pi_{n1}^T\breve\bA_k^{-1}(z_2)\pi_{n4}\)\\
=&- \pi_{n1}^T\(\bI_p+g_{2n}^0(z_1)\bSi_n\)^{-1}\pi_{n4}+\pi_{n1}^T\(\bI_p+g_{2n}^0(z_2)\bSi_n\)^{-1}\pi_{n4}+o_p(1)\\
=&\(g_{2n}^0(z_1)-g_{2n}^0(z_2)\) \pi_{n1}^T\(\bI_p+g_{2n}^0(z_1)\bSi_n\)^{-1}\bSi_n\(\bI_p+g_{2n}^0(z_2)\bSi_n\)^{-1}\pi_{n4}+o_p(1)\\
\xrightarrow{p}&\(g_{2}(z_1)-g_{2}(z_2)\) \lim_{n\to\infty}\pi_{n1}^T\(\bI_p+g_{2n}^0(z_1)\bSi_n\)^{-1}\bSi_n\(\bI_p+g_{2n}^0(z_2)\bSi_n\)^{-1}\pi_{n4}.
\end{align*}
On the other hand, rewrite $\E_k\(z_1\pi_{n1}^T\bA_k^{-1}(z_1)\pi_{n4}-z_2\pi_{n1}^T\breve\bA_k^{-1}(z_2)\pi_{n4}\)$ as
\begin{align*}
&\E_k\bigg[\pi_{n1}^T\bA_k^{-1}(z_1)\(z_1\breve\bA_k^{-1}(z_2)-z_2\bA_k^{-1}(z_1)\)\breve\bA_k^{-1}(z_2)\pi_{n4}\bigg]\\
=&\E_k\left[\pi_{n1}^T\bA_k^{-1}(z_1)\(\(z_1-z_2\)\sum_{j=1}^{k-1}\xi_j^2\br_j\br_j^T+\sum_{j=k+1}^{n}\(z_1\breve\xi_j^2\breve\br_j\breve\br_j^T-z_2\xi_j^2\br_j\br_j^T\)\)\breve\bA_k^{-1}(z_2)\pi_{n4}\right].
\end{align*}
We find the right hand side of the above equality equals to
\begin{align*}
&\(z_1-z_2\)\sum_{j=1}^{k-1}\E_k\left[\xi_j^2\psi_{j}(z_1)\psi_{j}(z_2)\pi_{n1}^T\bA_{kj}^{-1}(z_1)\br_j\br_j^T\breve\bA_{kj}^{-1}(z_2)\pi_{n4}\right]\\
&+z_1\sum_{j=k+1}^{n}\E_k\left[\breve\xi_j^2\breve\psi_{j}(z_2)\pi_{n1}^T\bA_k^{-1}(z_1)\breve\br_j\breve\br_j^T\breve\bA_{kj}^{-1}(z_2)\pi_{n4}\right]\\
&-z_2\sum_{j=k+1}^{n}\E_k\left[\xi_j^2\psi_{j}(z_1)\pi_{n1}^T\bA_{kj}^{-1}(z_1)\br_j\br_j^T\breve\bA_k^{-1}(z_2)\pi_{n4}\right]+o_p(1)\\
=&\(z_1-z_2\)\sum_{j=1}^{k-1}\xi_j^2\psi_{j}(z_1)\psi_{j}(z_2)\E_k\left[\pi_{n1}^T\bA_{kj}^{-1}(z_1)\(\br_j\br_j^T-\frac1n\bSi_n\)\breve\bA_{kj}^{-1}(z_2)\pi_{n4}\right]\\
&+\frac{\(z_1-z_2\)}n\sum_{j=1}^{k-1}\xi_j^2\psi_{j}(z_1)\psi_{j}(z_2)\E_k\left[\pi_{n1}^T\bA_{kj}^{-1}(z_1)\bSi_n\breve\bA_{kj}^{-1}(z_2)\pi_{n4}\right]\\
&+\frac{n-k}nw_n(z_1,z_2)\E_k\left[\pi_{n1}^T\bA_k^{-1}(z_1)\bSi_n\breve\bA_{k}^{-1}(z_2)\pi_{n4}\right]+o_p(1)\\
=
&\frac{\(z_1-z_2\)(k-1)}n\E\(\xi_1^2\psi_{1}(z_1)\psi_{1}(z_2)\)\E_k\left[\pi_{n1}^T\bA_{k}^{-1}(z_1)\bSi_n\breve\bA_{k}^{-1}(z_2)\pi_{n4}\right]\\
&+\frac{n-k}nw_n(z_1,z_2)\E_k\left[\pi_{n1}^T\bA_k^{-1}(z_1)\bSi_n\breve\bA_{k}^{-1}(z_2)\pi_{n4}\right]+o_p(1)\\
=
&w_{n}(z_1,z_2)\(1-\frac{k-1}{n}d_{n}(z_1,z_2)\)\E_k\left[\pi_{n1}^T\bA_k^{-1}(z_1)\bSi_n\breve\bA_{k}^{-1}(z_2)\pi_{n4}\right]+o_p(1),
\end{align*}
where the last second equality is from
\begin{align*}
	&\E\left|\sum_{j=1}^{k-1}\xi_j^2\psi_{j}(z_1)\psi_{j}(z_2)\E_k\left[\pi_{n1}^T\bA_{kj}^{-1}(z_1)\(\br_j\br_j^T-\frac1n\bSi_n\)\breve\bA_{kj}^{-1}(z_2)\pi_{n4}\right]\right|^2\\
	=&\E\(\xi_1^4\psi_{1}^2(z_1)\psi_{1}^2(z_2)\)\sum_{j=1}^{k-1}\E\left|\E_k\left[\pi_{n1}^T\bA_{kj}^{-1}(z_1)\(\br_j\br_j^T-\frac1n\bSi_n\)\breve\bA_{kj}^{-1}(z_2)\pi_{n4}\right]\right|^2\\
	&+\sum_{j\neq t}\E\Bigg\{\(\xi_j^2\xi_t^2\psi_{j}(z_1)\psi_{j}(z_2)\psi_{t}(z_1)\psi_{t}(z_2)\)\E_k\left[\pi_{n1}^T\bA_{kj}^{-1}(z_1)\(\br_j\br_j^T-\frac1n\bSi_n\)\breve\bA_{kj}^{-1}(z_2)\pi_{n4}\right]\\
	&\qquad\qquad\cdot\E_k\left[\pi_{n1}^T\bA_{kt}^{-1}(z_1)\(\br_t\br_t^T-\frac1n\bSi_n\)\breve\bA_{kt}^{-1}(z_2)\pi_{n4}\right]\Bigg\}
	=o(1)
\end{align*}
and
\begin{align*}
	&w_{n}(z_1,z_2)={z_1}\E\(\xi_1^2\psi_{1}(z_2)\)-{z_2}\E\(\xi_1^2\psi_{1}(z_1)\)\to z_1z_2\(g_2(z_1)-g_2(z_2)\),\\
	&d_{n}(z_1,z_2)=\frac{w_{n}(z_1,z_2)-\(z_1-z_2\)\E\(\xi_1^2\psi_{1}(z_1)\psi_{1}(z_2)\)}{w_{n}(z_1,z_2)}\to d(z_1,z_2).
\end{align*}
From these, one obtain
\begin{align*}
	&\E_k\left[\pi_{n1}^T\bA_k^{-1}(z_1)\bSi_n\breve\bA_{k}^{-1}(z_2)\pi_{n4}\right]\E_k\left[\pi_{n2}^T\bA_k^{-1}(z_1)\bSi_n\breve\bA_{k}^{-1}(z_2)\pi_{n3}\right]\\
=&\frac1{w_{n}^2(z_1,z_2)\(1-\frac{k-1}{n}d_{n}(z_1,z_2)\)^2}\E_k\(z_1\pi_{n1}^T\bA_k^{-1}(z_1)\pi_{n4}-z_2\pi_{n1}^T\breve\bA_k^{-1}(z_2)\pi_{n4}\)\\
&\qquad\qquad\cdot\E_k\(z_1\pi_{n2}^T\bA_k^{-1}(z_1)\pi_{n3}-z_2\pi_{n2}^T\breve\bA_k^{-1}(z_2)\pi_{n3}\)
+o_p(1),
\end{align*}
which yields
\begin{align}\label{sig1}
	\begin{split}
\mathcal{I}_1
	\xrightarrow{p}&h_{1}(z_1,z_2)\lim_{n\to\infty}\pi_{n1}^T\(\bI_p+g_{2n}^0(z_1)\bSi_n\)^{-1}\bSi_n\(\bI_p+g_{2n}^0(z_2)\bSi_n\)^{-1}\pi_{n4}\\
	&\quad\cdot\lim_{n\to\infty}\pi_{n2}^T\(\bI_p+g_{2n}^0(z_1)\bSi_n\)^{-1}\bSi_n\(\bI_p+g_{2n}^0(z_2)\bSi_n\)^{-1}\pi_{n3}\\
	=&h_{1}(z_1,z_2)r_{14}(z_1,z_2)r_{23}(z_1,z_2).
	\end{split}
\end{align}
Continuing with the same procedure, we deduce
\begin{align}\label{sig3}
	\begin{split}
	\mathcal{I}_2
	\xrightarrow{p}&h_{1}(z_1,z_2)r_{13}(z_1,z_2)r_{24}(z_1,z_2).
\end{split}
\end{align}

We are now prepared to address $\mathcal{I}_3$. It is known that 
\begin{align*}
		&\E_k\(\pi_{n1}^T\bA_k^{-1}(z_1)\pi_{n2}\)+z_1^{-1}\pi_{n1}^T\(\bI_p+g_{2n}^0(z_1)\bSi_n\)^{-1}\pi_{n2}
	\xrightarrow{p}0 \\ &z_1\E_k\(\pi_{n1}^T\bA_k^{-2}(z_1)\pi_{n2}\)+z_1\(z_1^{-1}\pi_{n1}^T\(\bI_p+g_{2n}^0(z_1)\bSi_n\)^{-1}\pi_{n2}\)'
	\xrightarrow{p}0 .
\end{align*}
Furthermore, it follows that
\begin{align*}
	&\E_k\(\pi_{n1}^T\bA_k^{-1}(z_1)\pi_{n2}\)\\
=&\sum_{j\neq k}\E_k\(\xi_j^2\pi_{n1}^T\bA_k^{-1}(z_1)\br_j\br_j^T\bA_k^{-1}(z_1)\pi_{n2}\)-z_1\E_k\(\pi_{n1}^T\bA_k^{-2}(z_1)\pi_{n2}\)\\
=&\sum_{j< k}\xi_j^2\psi_{j}^2(z_1)\E_k\(\pi_{n1}^T\bA_{kj}^{-1}(z_1)\(\br_j\br_j^T-\frac1n\bSi_n\)\bA_{kj}^{-1}(z_1)\pi_{n2}\)\\
&+\frac1n\sum_{j\neq k}\E_k\(\xi_j^2\psi_{j}^2(z_1)\pi_{n1}^T\bA_{kj}^{-1}(z_1)\bSi_n\bA_{kj}^{-1}(z_1)\pi_{n2}\)-z_1\E_k\(\pi_{n1}^T\bA_k^{-2}(z_1)\pi_{n2}\)+o_p(1)\\
=&\E\(\xi_1^2\psi_{1}^2(z_1)\)\E_k\(\pi_{n1}^T\bA_{k}^{-1}(z_1)\bSi_n\bA_{k}^{-1}(z_1)\pi_{n2}\)-z_1\E_k\(\pi_{n1}^T\bA_k^{-2}(z_1)\pi_{n2}\)+o_p(1).
\end{align*}
Consequently, we see
\begin{align*}
	\E_k\bigg(\pi_{n1}^T\bA_{k}^{-1}(z_1)&\bSi_n\bA_{k}^{-1}(z_1)\pi_{n2}\bigg)=\frac{\E_k\(\pi_{n1}^T\bA_k^{-1}(z_1)\pi_{n2}\)+z_1\E_k\(\pi_{n1}^T\bA_k^{-2}(z_1)\pi_{n2}\)}{\E\(\xi_1^2\psi_{1}^2(z_1)\)}+o_p(1)\\
	=&-\frac{[g_{2n}^0(z_1)]'}{z_1g_{2n}^0(z_1)}\pi_{n1}^T\(\bI_p+g_{2n}^0(z_1)\bSi_n\)^{-2}\bSi_n\pi_{n2}+o_p(1)\\
	\to&-\frac{g_2'(z_1)}{z_1g_2(z_1)}\lim_{n\to\infty}\pi_{n1}^T\(\bI_p+g_{2n}^0(z_1)\bSi_n\)^{-2}\bSi_n\pi_{n2}.
\end{align*}
This implies
\begin{align}\label{sig2}
	\mathcal{I}_3\xrightarrow{p}& {h_{2}(z_1,z_2)}r_{12}(z_1)r_{34}(z_2).
\end{align}
Combining \eqref{sig1}, \eqref{sig3}, and \eqref{sig2}, we conclude
\begin{align*}
	\sum_{k=1}^n\E_{k-1}\bigg(Y_{k1}(z_1)Y_{k2}(z_2)\bigg)
\xrightarrow{p}&h_{1}(z_1,z_2)r_{14}(z_1,z_2)r_{23}(z_1,z_2)+h_{1}(z_1,z_2)r_{13}(z_1,z_2)r_{24}(z_1,z_2)\\
&+{h_{2}(z_1,z_2)}r_{12}(z_1)r_{34}(z_2).
\end{align*}
\subsubsection{Tightness of $M_{nj}(z),j=1,2$}
We now proceed with the proof of tightness. Initially, owing to the similarity between $M_{n1}(z)$ and $M_{n2}(z)$, it suffices to demonstrate the tightness of the sequence of random functions $M_{n1}(z)$ for $z\in \mathcal{Z}$. Utilizing Theorem 12.3 of Billingsley and Section \ref{nonrand}, it is only necessary to show
\begin{align*}
	\sup_{n;z_1,z_2\in\mathcal{Z}}\frac{\E\left|M_{n1}^1(z_1)-M_{n1}^1(z_2)\right|^2}{\left|z_1-z_2\right|^2}\le C.
\end{align*}
Note that from \eqref{inver}
\begin{align*}
&\frac{M_{n1}^1(z_1)-M_{n1}^1(z_2)}{z_1-z_2}=\sqrt p\sum_{k=1}^n\left(\E_k-\E_{k-1}\right)\pi_{n1}^T\bA^{-1}(z_1)\bA^{-1}(z_2)\pi_{n2}\\
=
&-\sqrt p\sum_{k=1}^n\left(\E_k-\E_{k-1}\right)\beta_k(z_2)\xi_k^2\pi_{n1}^T\bA_k^{-1}(z_1)\bA_k^{-1}(z_2)\br_k\br_k^T\bA_k^{-1}(z_2)\pi_{n2}\\
&-\sqrt p\sum_{k=1}^n\left(\E_k-\E_{k-1}\right)\beta_k(z_1)\xi_k^2\pi_{n1}^T\bA_k^{-1}(z_1)\br_k\br_k^T\bA_k^{-1}(z_1)\bA_k^{-1}(z_2)\pi_{n2}\\
&+\sqrt p\sum_{k=1}^n\left(\E_k-\E_{k-1}\right)\beta_k(z_1)\beta_k(z_2)\xi_k^4\pi_{n1}^T\bA_k^{-1}(z_1)\br_k\br_k^T\bA_k^{-1}(z_1)\bA_k^{-1}(z_2)\br_k\br_k^T\bA_k^{-1}(z_2)\pi_{n2}\\
\triangleq&\mathcal{J}_1(z_1,z_2)+\mathcal{J}_2(z_1,z_2)+\mathcal{J}_3(z_1,z_2).
\end{align*}
Therefore, the ensuing steps aim to demonstrate
\begin{align}\label{tign}
	\sup_{n;z_1,z_2\in\mathcal{Z}}\E\left|\mathcal{J}_t(z_1,z_2)\right|^2\le C,\quad t=1,2,3.
\end{align}
Before proving \eqref{tign}, we provide moment bounds for specific random functions for $z\in\mathcal{Z}$ without delving into the details.  The first set of bounds pertains to any positive $q$ 
\begin{align}\label{boundcn1}
	\max\left\{\E\left\|\bA^{-1}(z)\right\|^q,\E\left\|\bA_k^{-1}(z)\right\|^q,\E\left\|\bA_{kj}^{-1}(z)\right\|^q\right\}\le C_q.
\end{align}
To save space and avoid redundancy, we omit this part and direct the reader to \cite{Bai2019} and \cite{zhangzheng2022} for more comprehensive details. The second set of bounds is given by:\begin{align}\label{bound2}
\E\left|\beta_k(z)\right|^q\le C_q,\quad{\rm and }\quad\E\left|\phi_k(z)\right|^q\le C_q.
\end{align}
Applying Lemma \ref{quadratic}, \eqref{boundcn1}, and \eqref{bound2}, it yields
\begin{align*}
	&\E\left|\mathcal{J}_1(z_1,z_2)\right|^2
\le p\sum_{k=1}^n\E\left|\beta_k(z_2)\xi_k^2\pi_{n1}^T\bA_k^{-1}(z_1)\bA_k^{-1}(z_2)\br_k\br_k^T\bA_k^{-1}(z_2)\pi_{n2}\right|^2\\
\le&p\sum_{k=1}^n\E^{1/2}\left|\beta_k(z_2)\right|^4\E^{1/2}\xi_k^8\E^{1/2}\left|\pi_{n1}^T\bA_k^{-1}(z_1)\bA_k^{-1}(z_2)\br_k\br_k^T\bA_k^{-1}(z_2)\pi_{n2}\right|^4\\
\le& \frac Cn\sum_{k=1}^n\E^{1/2}\left\|\bA_k^{-1}(z_1)\right\|^4\left\|\bA_k^{-1}(z_2)\right\|^8\le C.
\end{align*}
Using the same argument, we can derive
\begin{align*}
	\E\left|\mathcal{J}_2(z_1,z_2)\right|^2\le C\quad{\rm and}\quad\E\left|\mathcal{J}_3(z_1,z_2)\right|^2\le C.
\end{align*}
Therefore, we have completed the proof of tightness.

\subsubsection{Convergence of $M_{nj}^2(z),j=1,2$.}\label{nonrand}
A slight modification of the argument in \cite{Bai2019} allows us to extend their considered domain to $\mathcal{Z}$. Consequently, we establish that 
 $\sup_{n,z\in\mathcal{Z}}\|\mathbb{H}_n^{-1}(z)\|<\infty.$

Utilizing \eqref{expe} and Lemma \ref{quadratic}, we obtain
\begin{align}\label{expe1}
		M_{n1}^2(z)
		=&-{\sqrt p}\E\xi_1^2\(\phi_1(z)-\psi_1(z)\) \pi_{n1}^T\mathbb{H}_n^{-1}(z)\bSi_n\E\bA^{-1}(z)\pi_{n2}\notag\\
		&+\sqrt p\sum_{k=1}^n\E\beta_k(z)\phi_k(z){\eta}_k(z)\xi_k^4\pi_{n1}^T\mathbb{H}_n^{-1}(z)\br_k\br_k^T\bA_k^{-1}(z)\pi_{n2}+o(1)\\
		\triangleq&\mathcal{K}_1(z)+\mathcal K_2(z)+o(1).\notag
\end{align}
By utilizing \eqref{beta2} and Lemma \ref{quadeq}, it follows that
\begin{align*}
	\mathcal K_2(z)=& \sqrt p\sum_{k=1}^n\E\phi_k^2(z){\eta}_k(z)\xi_k^4\pi_{n1}^T\mathbb{H}_n^{-1}(z)\br_k\br_k^T\bA_k^{-1}(z)\pi_{n2}\\
	&-\sqrt p\sum_{k=1}^n\E\beta_k(z)\phi_k^2(z){\eta}_k^2(z)\xi_k^6\pi_{n1}^T\mathbb{H}_n^{-1}(z)\br_k\br_k^T\bA_k^{-1}(z)\pi_{n2}\\
	=
	&-\sqrt p\sum_{k=1}^n\E\beta_k(z)\phi_k^2(z){\eta}_k^2(z)\xi_k^6\pi_{n1}^T\mathbb{H}_n^{-1}(z)\br_k\br_k^T\bA_k^{-1}(z)\pi_{n2}+o(1).
\end{align*}
Combining the above equality and \eqref{etab}, one finds
\begin{align}\label{expe2}
	|\mathcal K_2(z)|\le& \frac C{\sqrt n}\sum_{k=1}^n\E^{1/2}|{\eta}_k(z)|^4\le \frac C{\sqrt n}.
\end{align}
Moreover, arguments from \cite{Bai2019} show that
\begin{align*}
	\sup_{n,z\in\mathcal{Z}}\sqrt p\(\frac1n\E\rtr\(\bA^{-1}(z)\bSi_n\)-g_{1n}^0(z)\)\to0.
\end{align*}
Thus, we have
\begin{align}\label{expe3}
	|\mathcal K_3(z)|\le& C\left|{\sqrt p}\E\xi_1^4\phi_1(z)\psi_1(z)\(\frac1n\E\rtr\(\bA^{-1}(z)\bSi_n\)-g_{1n}^0(z)\) \right|\to0.
\end{align}
Together with \eqref{expe1}-\eqref{expe3}, we conclude that for $z\in\mathcal{Z},$
$
	M_{n1}^2(z)\to0.
$
Additionally, employing the same procedure yields
\begin{align*}
	M_{n2}^2(z)\to0\quad z\in\mathcal{Z}.
\end{align*}

\subsection{Proof of Theorem \ref{vecClt}}
Note that, based on the discussion in Section \ref{FCLT}, following the approach of \cite{BaiM07A}, the proof of this lemma requires identifying a suitable domain $\mathcal{Z}$ and truncating the corresponding stochastic process. Then the desired result is indeed consequences of CLT for bilinear forms. To achieve this, let  $\mathcal{C}_n=\mathcal{C}\cap \{z:\Im(z)\ge n^{-1}\varepsilon_n\}$. The truncated process $\hat M_{n}(z)$ is defined as
$$\hat M_{n}(z)=\begin{cases}
	M_{n}(z), \qquad\qquad\qquad\qquad\ \ {\rm for}\ z\in\mathcal{C}_n\\
	M_{n}(x_r+{\rm sign}(\Im z)\cdot in^{-1}\varepsilon_n),\ {\rm for}\ x=x_r,v\in[0,n^{-1}\varepsilon_n],\\
	M_{n}(x_{\ell}+{\rm sign}(\Im z)\cdot in^{-1}\varepsilon_n),\ {\rm for}\ x=x_{\ell},v\in[0,n^{-1}\varepsilon_n].
\end{cases}$$ 
It can be verified with probability 1
\begin{align*} 
	M_{n}(z)-\hat M_{n}(z)\to 0, \quad{\rm for}\ z\in\mathcal{C}.
\end{align*}
Let $x_r$ be a number which is greater than the right endpoint of interval \eqref{inter} and let $x_{\ell}$ be a negative number if the left endpoint of interval \eqref{inter} is zero, otherwise let $$x_{\ell}\in\(0,a\liminf_n\lambda_{min}^{\bSi_n}I_{(0,1)}(c)\(1-\sqrt{c}\)^2\).$$
Let $v_0>0$ be arbitrary and $\mathcal{C}_u=\{u\pm iv_0:u\in[x_{\ell},x_r]\}$. Then
\begin{align}\label{contor}
	\mathcal{C}=\mathcal{C}_u\cup\big\{x_{\ell}+iv:v\in[-v_0,v_0]\big\}\cup\big\{x_{r}+iv:v\in[-v_0,v_0]\big\}.
\end{align}
Hence, we conclude that $\mathcal{C}$ is an appropriate domain, and the results follow directly from Theorem \ref{thclt}.

\section{Auxiliary lemmas}
We present the following lemmas, which are used in the proofs above.

\begin{lem}[]
\label{quadratic}
Let $\mathbf{A}=(a_{jk})$ be a $p\times p$ nonrandom matrix and $\mathbf{r}=\sqrt{c}_n\bGa_n\mathbf{u}$ where $\mathbf{u}\sim U(S^{p-1})$. Then for $q\ge2$,
\begin{align*}
\E\left|\mathbf{r}^T\mathbf{A}\mathbf{r}-\frac1n\operatorname{tr}(\mathbf{A}\bSi_n)\right|^q \le C_qp^{-q}r^{q/2}\|\mathbf{A}\bSi_n\|^q,
\end{align*}
where $r=\operatorname{rank}(\mathbf{A})$ and $C_q$ is a constant depending on $q$ only.
\end{lem}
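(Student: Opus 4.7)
The plan is to reduce the sphere-uniform quadratic form to a Gaussian one via the representation $\mathbf{u}=\mathbf{z}/\|\mathbf{z}\|$ with $\mathbf{z}\sim N(\mathbf{0},\bI_p)$. Setting $M:=\bGa_n^{T}\mathbf{A}\bGa_n$, one has $\tr(M)=\tr(\mathbf{A}\bSi_n)$ and $\operatorname{rank}(M)\le r$. Since $c_n/p=1/n$, we may write
\[
\mathbf{r}^T\mathbf{A}\mathbf{r}-\tfrac{1}{n}\tr(\mathbf{A}\bSi_n)=\tfrac{c_n}{\|\mathbf{z}\|^2}\bigl(\mathbf{z}^TM\mathbf{z}-\tr M\bigr)+\tfrac{\tr M}{n}\cdot\tfrac{p-\|\mathbf{z}\|^2}{\|\mathbf{z}\|^2},
\]
and by $|a+b|^q\le 2^{q-1}(|a|^q+|b|^q)$ the task splits into two independent estimates. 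Without loss of generality I would first replace $\mathbf{A}$ by its symmetrization $(\mathbf{A}+\mathbf{A}^T)/2$, which leaves both $\mathbf{r}^T\mathbf{A}\mathbf{r}$ and $\tr(\mathbf{A}\bSi_n)$ unchanged.

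For the first term I would combine Cauchy--Schwarz with the standard Rosenthal-type moment inequality for Gaussian quadratic forms, $\E|\mathbf{z}^TM\mathbf{z}-\tr M|^{2q}\le C_q\|M\|_F^{2q}$, and the chi-square negative-moment estimate $\E[\|\mathbf{z}\|^{-4q}]\le C_qp^{-2q}$ (valid once $p$ is large relative to $q$). This produces
\[
\E\Bigl|\tfrac{c_n}{\|\mathbf{z}\|^2}(\mathbf{z}^TM\mathbf{z}-\tr M)\Bigr|^q\le C_qn^{-q}\|M\|_F^q\le C_qn^{-q}r^{q/2}\|M\|^q,
\]
where the last step uses the rank bound $\|M\|_F^2\le \operatorname{rank}(M)\|M\|^2$. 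For the second term, the chi-square central moment bound $\E|p-\|\mathbf{z}\|^2|^{2q}\le C_qp^q$, combined with $|\tr M|\le \sqrt{r}\,\|M\|_F\le r\|M\|$ and Cauchy--Schwarz against $\E[\|\mathbf{z}\|^{-4q}]$, yields a contribution of order $p^{-q/2}\cdot r^q\|M\|^q/n^q$, which is strictly of smaller order than the first since $r\le p\asymp n$.

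The principal difficulty is twofold. First, controlling the $\|\mathbf{z}\|^{-2}$ denominator cleanly: this is a routine but delicate exercise in chi-square negative moments, valid for all $q$ once $p$ is taken large enough. Second, passing from the norm $\|M\|=\|\bGa_n^T\mathbf{A}\bGa_n\|$ that naturally arises in the Gaussian reduction to the target norm $\|\mathbf{A}\bSi_n\|$ in the statement; here one uses that $\bGa_n^T\mathbf{A}\bGa_n$ and $\mathbf{A}\bSi_n$ share the same nonzero eigenvalues via the $PQ$/$QP$ identity, and that after the symmetrization of $\mathbf{A}$ spectral and eigenvalue norms coincide on the symmetric factor, so that Assumption (b) (uniformly bounded $\|\bGa_n\|$) absorbs the remaining discrepancy into a constant. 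Finally, since $c_n\to c\in(0,\infty)$, the factor $n^{-q}$ is equivalent to $p^{-q}$ up to a constant, yielding the claimed bound $C_qp^{-q}r^{q/2}\|\mathbf{A}\bSi_n\|^q$.
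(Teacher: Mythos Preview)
Your argument is correct and proceeds along a genuinely different, more elementary route than the paper's. The paper's proof is essentially a one-line citation: it invokes Lemma~5 of \cite{GaoH17H}, which already supplies the bound
\[
\E\Bigl|\mathbf{u}^T\bGa_n^T\mathbf{A}\bGa_n\mathbf{u}-\tfrac1p\tr(\mathbf{A}\bSi_n)\Bigr|^q\le \tfrac{C_q}{p^q}\bigl[\tr(\mathbf{A}\bSi_n\mathbf{A}^T\bSi_n)\bigr]^{q/2}
\]
(plus a Schatten-type term), and then applies the rank inequality $\tr(\mathbf{A}\bSi_n\mathbf{A}^T\bSi_n)\le r\|\mathbf{A}\bSi_n\|^2$ to conclude. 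You instead re-derive the cited inequality from scratch via the Gaussian representation $\mathbf{u}=\mathbf{z}/\|\mathbf{z}\|$, the decomposition into the centred quadratic form $\mathbf{z}^TM\mathbf{z}-\tr M$ plus the $\chi^2$ fluctuation $p-\|\mathbf{z}\|^2$, and standard Rosenthal/negative-moment estimates. This is longer but fully self-contained and avoids the external reference; both routes land on the same Frobenius-norm intermediate bound $\|M\|_F^q$ before the rank step.

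One comment on the norm passage you flag as a difficulty: your chain $\|M\|=\rho(M)=\rho(\tilde{\mathbf A}\bSi_n)=\|\bSi_n^{1/2}\tilde{\mathbf A}\bSi_n^{1/2}\|$ is correct after symmetrization, but going from $\|\bSi_n^{1/2}\tilde{\mathbf A}\bSi_n^{1/2}\|$ to $\|\mathbf{A}\bSi_n\|$ is not an identity in general (the two matrices are similar, hence share eigenvalues, but need not share singular values). The paper's own step faces the same issue, since $\tr(\mathbf{A}\bSi_n\mathbf{A}^T\bSi_n)=\|\bSi_n^{1/2}\mathbf{A}\bSi_n^{1/2}\|_F^2$ as well. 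In every application in the paper the matrices involved have $\|\mathbf{A}\|$, $\|\bSi_n\|$ bounded separately (e.g.\ $\mathbf{A}=\bA_k^{-1}(z)\pi_{n2}\pi_{n1}^T\bA_k^{-1}(z)$ with $r=1$), so both $\|M\|$ and $\|\mathbf{A}\bSi_n\|$ are of the same bounded order and the distinction is immaterial; your remark that Assumption~(b) absorbs the discrepancy is the operative point.
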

\begin{proof}
From Lemma 5 in \cite{GaoH17H}, we have
\begin{align*}
\E\left|\mathbf{u}^T\bGa_n^T\mathbf{A}\bGa_n \mathbf{u}-\frac{1}{p} \operatorname{tr} (\mathbf{A}\bSi_n)\right|^q \le \frac{C_q}{p^q}\left[\operatorname{tr}(\mathbf{A}\bSi_n\mathbf{A}^T\bSi_n)^{q/2}+\operatorname{tr}(\mathbf{A}\bSi_n\mathbf{A}^T\bSi_n)^{q/2}\right],
\end{align*}
where $C_q$ is a positive constant depending only on $q$. Hence,
we obtain
\begin{align*}
\E\left|\mathbf{r}^T\mathbf{A}\mathbf{r}-\frac1n\operatorname{tr}(\mathbf{A}\bSi_n)\right|^q &\le C_qp^{-q}r^{q/2}\|\mathbf{A}\bSi_n\|^q.
\end{align*}
This completes the proof of the lemma.
\end{proof}

\begin{lem}
\label{quadeq}
Let $\mathbf{A}$ and $\mathbf{B}$ be two $p\times p$ nonrandom matrices, and $\mathbf{u}$ is uniformly distributed on the unit sphere $S^{p-1}$ in $\mathbb{R}^p$. Then we have
\begin{align*}
&\E\left(\mathbf{u}^T\mathbf{A} \mathbf{u}-\frac1p\operatorname{tr} \mathbf{A}\right)\left(\mathbf{u}^T\mathbf{B} \mathbf{u}-\frac1p\operatorname{tr} \mathbf{B}\right)= \frac{\operatorname{tr} (\mathbf{A}\mathbf{B}^T)+\operatorname{tr} (\mathbf{A}\mathbf{B})}{p(p+2)}-\frac{2\operatorname{tr}(\mathbf{A})\operatorname{tr}(\mathbf{B})}{p^2(p+2)}.
\end{align*}
\end{lem}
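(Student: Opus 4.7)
The statement is a standard second-moment computation for quadratic forms in a uniformly distributed unit vector. My plan is to derive it by direct expansion in coordinates, using only the known second- and fourth-moment tensors of $\mathbf{u} \sim U(S^{p-1})$.

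First I would compute $\E(u_i u_j)$ and $\E(u_i u_j u_k u_l)$. By rotational invariance of the distribution of $\mathbf{u}$, both tensors must be invariant under the orthogonal group, so they are proportional to the only isotropic tensors of the relevant ranks: $\E(u_i u_j) = c_2 \delta_{ij}$ and $\E(u_i u_j u_k u_l) = c_4 (\delta_{ij}\delta_{kl} + \delta_{ik}\delta_{jl} + \delta_{il}\delta_{jk})$. The constants are fixed by the constraint $\|\mathbf{u}\|^2 = 1$: taking $i=j$ and summing gives $1 = c_2 p$, so $c_2 = 1/p$; taking $i=j$, $k=l$ and summing gives $1 = \E(\|\mathbf{u}\|^4) = c_4(p^2 + 2p)$, so $c_4 = 1/(p(p+2))$. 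Equivalently, one may realize $\mathbf{u} = \mathbf{z}/\|\mathbf{z}\|$ with $\mathbf{z} \sim N(\mathbf{0}_p, \mathbf{I}_p)$, use the independence of $\mathbf{u}$ from $\|\mathbf{z}\|$ together with $\E \|\mathbf{z}\|^{-2} = 1/(p-2)$ and $\E \|\mathbf{z}\|^{-4} = 1/((p-2)(p-4))$, but the isotropy argument is cleaner.

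Next I would expand
\begin{align*}
\E\bigl[(\mathbf{u}^T \mathbf{A}\mathbf{u})(\mathbf{u}^T \mathbf{B}\mathbf{u})\bigr] = \sum_{i,j,k,l} A_{ij} B_{kl}\, \E(u_i u_j u_k u_l),
\end{align*}
and apply the fourth-moment identity above. The three contractions yield $\operatorname{tr}(\mathbf{A})\operatorname{tr}(\mathbf{B})$ from $\delta_{ij}\delta_{kl}$, $\operatorname{tr}(\mathbf{A}\mathbf{B}^T)$ from $\delta_{ik}\delta_{jl}$, and $\operatorname{tr}(\mathbf{A}\mathbf{B})$ from $\delta_{il}\delta_{jk}$. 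Similarly, $\E(\mathbf{u}^T \mathbf{A}\mathbf{u}) = \operatorname{tr}(\mathbf{A})/p$.

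Finally, I would subtract the product of the means, giving
\begin{align*}
\frac{\operatorname{tr}(\mathbf{A})\operatorname{tr}(\mathbf{B}) + \operatorname{tr}(\mathbf{A}\mathbf{B}^T) + \operatorname{tr}(\mathbf{A}\mathbf{B})}{p(p+2)} - \frac{\operatorname{tr}(\mathbf{A})\operatorname{tr}(\mathbf{B})}{p^2},
\end{align*}
and combine the two terms involving $\operatorname{tr}(\mathbf{A})\operatorname{tr}(\mathbf{B})$ using the identity
\begin{align*}
\frac{1}{p(p+2)} - \frac{1}{p^2} = -\frac{2}{p^2(p+2)},
\end{align*}
which produces precisely the claimed formula. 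There is no real obstacle here; the only mild care is in pinning down the constant $c_4$ from the normalization, and in tracking which contraction gives $\operatorname{tr}(\mathbf{A}\mathbf{B})$ versus $\operatorname{tr}(\mathbf{A}\mathbf{B}^T)$.
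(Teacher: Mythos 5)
Your proof is correct and complete: the isotropy argument pins down $c_2=1/p$ and $c_4=1/(p(p+2))$ via the normalization $\|\mathbf{u}\|^2=1$, the three contractions are identified correctly (in particular $\delta_{ik}\delta_{jl}$ giving $\operatorname{tr}(\mathbf{A}\mathbf{B}^T)$ and $\delta_{il}\delta_{jk}$ giving $\operatorname{tr}(\mathbf{A}\mathbf{B})$), and the final algebra matches the stated formula. The paper states this lemma without proof, so there is no argument to compare against; yours is the standard moment-tensor computation one would expect.
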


\begin{lem}[Theorem 35.12 of \cite{Billingsley95P}]
\label{cltmar}
Suppose that for each $n$, $Y_{n1}, Y_{n2}, \ldots, Y_{nr_n}$ is a real martingale difference sequence with respect to the increasing $\sigma$-field $\{\mathcal{F}_{nj}\}$ with second moments. If, as $n \to \infty$,
$
\sum_{j=1}^{r_n} \mathrm{E}\left(Y_{nj}^2 \,|\, \mathcal{F}_{n, j-1}\right) \stackrel{i.p.}{\longrightarrow} \sigma^2,
$
where $\sigma^2$ is a positive constant, and, for each $\varepsilon > 0,$
$
\sum_{j=1}^{r_n} \mathrm{E}\left(Y_{nj}^2 I_{\left(|Y_{nj}| \geq \varepsilon\right)}\right) \rightarrow 0,
$
then
$
\sum_{j=1}^{r_n} Y_{nr_n} \stackrel{\mathcal{D}}{\rightarrow} N\left(0, \sigma^2\right).
$
\end{lem}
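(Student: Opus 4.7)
The plan is to prove the result by the characteristic-function method, exactly in the spirit of Billingsley's treatment. Since the conclusion is weak convergence to $N(0,\sigma^2)$, by the Lévy continuity theorem it suffices to show that $\varphi_n(t):=\E\exp(itS_n)\to e^{-t^2\sigma^2/2}$ for every fixed $t\in\mathbb{R}$, where $S_n=\sum_{j=1}^{r_n}Y_{nj}$.

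First I would truncate to reduce to uniformly bounded differences. Using the Lindeberg hypothesis, pick $\varepsilon_n\downarrow0$ slowly enough that $\sum_{j=1}^{r_n}\E(Y_{nj}^2 I(|Y_{nj}|\ge\varepsilon_n))\to 0$, and set
\[
\tilde Y_{nj}=Y_{nj}I(|Y_{nj}|\le\varepsilon_n)-\E\bigl[Y_{nj}I(|Y_{nj}|\le\varepsilon_n)\,\big|\,\mathcal{F}_{n,j-1}\bigr].
\]
The centering preserves the martingale-difference property, $|\tilde Y_{nj}|\le 2\varepsilon_n$, and a direct $L^2$ estimate using the martingale orthogonality plus the Lindeberg control shows $\tilde S_n-S_n\to 0$ in probability. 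A similar argument shows the conditional variances $\tilde\sigma_{nj}^2:=\E[\tilde Y_{nj}^2\mid\mathcal{F}_{n,j-1}]$ still satisfy $\sum_{j=1}^{r_n}\tilde\sigma_{nj}^2\toop\sigma^2$. Thus it is enough to treat $\tilde S_n$.

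Next I would analyse the product of conditional characteristic functions. Let $\phi_{nj}(t):=\E[e^{it\tilde Y_{nj}}\mid\mathcal{F}_{n,j-1}]$ and use the telescoping identity
\[
\prod_{j=1}^{r_n}e^{it\tilde Y_{nj}}-\prod_{j=1}^{r_n}\phi_{nj}(t)=\sum_{j=1}^{r_n}\Bigl(\prod_{k<j}e^{it\tilde Y_{nk}}\Bigr)\bigl(e^{it\tilde Y_{nj}}-\phi_{nj}(t)\bigr)\Bigl(\prod_{k>j}\phi_{nk}(t)\Bigr).
\]
Each summand has zero conditional mean given $\mathcal{F}_{n,j-1}$ (the middle factor does, and the left factor is $\mathcal{F}_{n,j-1}$-measurable, while the right factor on the appropriate filtration can be handled by iterating), which yields $\E\prod_{j}e^{it\tilde Y_{nj}}=\E\prod_{j}\phi_{nj}(t)$. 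A Taylor expansion then gives $\phi_{nj}(t)=1-\tfrac{t^2}{2}\tilde\sigma_{nj}^2+R_{nj}$ with $|R_{nj}|\le C|t|^3\varepsilon_n\tilde\sigma_{nj}^2$, since the linear term in $\tilde Y_{nj}$ vanishes by the martingale-difference property and $|\tilde Y_{nj}|^3\le 2\varepsilon_n\tilde Y_{nj}^2$.

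Finally I would pass from product to exponential. Because $\max_{j}\tilde\sigma_{nj}^2\le 4\varepsilon_n^2\to 0$, the elementary inequality $\bigl|\prod_{j}(1+z_j)-\exp\bigl(\sum_j z_j\bigr)\bigr|\le\exp\bigl(\sum_j|z_j|\bigr)\sum_j|z_j|^2$, valid for $|z_j|$ sufficiently small, combined with $\sum_j\tilde\sigma_{nj}^2\toop\sigma^2$ and $\sum_j|R_{nj}|\toop 0$, gives $\prod_j\phi_{nj}(t)\toop e^{-t^2\sigma^2/2}$. The product is uniformly bounded by $1$, so dominated convergence promotes convergence in probability to convergence of expectations, finishing the proof. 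The main obstacle is the careful book-keeping in the truncation step and the justification that all error terms remain negligible on the events where the random conditional variances are uniformly small; once that control is in place, the remaining manipulations are routine.
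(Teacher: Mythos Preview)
The paper does not prove this lemma at all; it is quoted as Theorem~35.12 of Billingsley and used as a black box in the appendix, so there is no in-paper argument to compare against. Your outline follows the classical characteristic-function route, and the truncation step as well as the product-to-exponential step are correct.

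The telescoping step, however, contains a genuine error. You assert that
\[
\E\prod_{j} e^{it\tilde Y_{nj}}=\E\prod_{j}\phi_{nj}(t)
\]
because each summand in the telescope has zero conditional mean given $\mathcal{F}_{n,j-1}$. But in your decomposition the trailing factor $\prod_{k>j}\phi_{nk}(t)$ is \emph{not} $\mathcal{F}_{n,j-1}$-measurable (each $\phi_{nk}$ is only $\mathcal{F}_{n,k-1}$-measurable), so its product with the centred factor $e^{it\tilde Y_{nj}}-\phi_{nj}(t)$ need not have zero conditional mean, and the displayed identity is simply false in general. A two-step example already breaks it: take $\tilde Y_1=\pm1$ equiprobable and let the conditional law of $\tilde Y_2$ depend on $\tilde Y_1$. ``Handling by iterating'' cannot rescue this, because conditioning down from $\mathcal{F}_{n,r_n-1}$ immediately runs into the same non-measurability of $\phi_{n,r_n}$ with respect to $\mathcal{F}_{n,r_n-2}$.

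The standard repair is to work with the \emph{ratio}
\[
Z_{n,k}=\prod_{j\le k}e^{it\tilde Y_{nj}}\Big/\prod_{j\le k}\phi_{nj}(t),
\]
which \emph{is} a martingale in $k$ once the truncation forces each $\phi_{nj}(t)$ bounded away from $0$; hence $\E Z_{n,r_n}=1$. Combining this with your estimate $\prod_j\phi_{nj}(t)\toop e^{-t^2\sigma^2/2}$ and a stopping-time control of $\sum_j\tilde\sigma_{nj}^2$ (to make $|Z_{n,r_n}|$ uniformly bounded so that convergence in probability upgrades to $L^1$) yields $\E e^{it\tilde S_n}\to e^{-t^2\sigma^2/2}$. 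After this correction the remainder of your outline goes through.
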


\begin{lem}
\label{burk}
Let $\{X_k\}$ be a real martingale difference sequence with respect to the increasing $\sigma$-field $\mathcal{F}_k$, and let $\E_k$ denote conditional expectation with respect to $\mathcal{F}_k$. Then for $q \geq 2$,
$
\E\left|\sum X_k\right|^q \leq C_q \left[\E\left(\sum \E_{k-1}|X_k|^2\right)^{q/2} + \sum \E|X_k|^q\right].
$
\end{lem}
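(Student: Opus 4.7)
The plan is to derive this Burkholder-type inequality from the Burkholder--Davis--Gundy (BDG) bound by a decomposition-and-induction argument in the style of Rosenthal. Throughout, $C_q$ denotes a constant depending only on $q$ whose value may change from line to line.

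First I would apply the scalar BDG inequality: for any real martingale with differences $X_k$ and any $q \geq 2$,
$$\E\Big|\sum_k X_k\Big|^q \leq C_q\, \E\Big(\sum_k X_k^2\Big)^{q/2}.$$
I then write $\sum_k X_k^2 = S + R$, where $S := \sum_k \E_{k-1}|X_k|^2$ is the predictable compensator and $R := \sum_k\bigl(|X_k|^2 - \E_{k-1}|X_k|^2\bigr)$ is itself a sum of martingale differences with respect to $\{\mathcal{F}_k\}$. The elementary inequality $(a+b)^{q/2} \leq 2^{q/2-1}(a^{q/2} + |b|^{q/2})$ reduces matters to controlling $\E|R|^{q/2}$ by $C_q \sum_k \E|X_k|^q$, since the $\E S^{q/2}$ contribution is already in the desired form.

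For $\E|R|^{q/2}$ I would proceed by induction on the dyadic position of $q$. In the base range $q \in [2,4]$ (so $q/2 \in [1,2]$) I would apply the von Bahr--Esseen inequality for martingale differences to obtain
$$\E|R|^{q/2} \leq C_q \sum_k \E|Y_k|^{q/2}, \qquad Y_k := |X_k|^2 - \E_{k-1}|X_k|^2,$$
and then use $|Y_k|^{q/2} \leq 2^{q/2-1}\bigl(|X_k|^q + (\E_{k-1}|X_k|^2)^{q/2}\bigr)$ together with conditional Jensen (valid because $q \geq 2$) to close the estimate. For $q > 4$ the sequence $\{Y_k\}$ is again a martingale difference sequence, now with exponent $q/2 \geq 2$, so applying the lemma recursively to $\{Y_k\}$ reduces the problem to bounds on $\E\bigl(\sum_k \E_{k-1}|Y_k|^2\bigr)^{q/4}$ and $\sum_k \E|Y_k|^{q/2}$; both are controlled by $\E S^{q/2} + \sum_k \E|X_k|^q$ via Cauchy--Schwarz and conditional Jensen, and a finite induction concludes.

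The main obstacle I anticipate is the bookkeeping required to reabsorb the nested predictable-variation terms generated at each recursive step back into the original $\E S^{q/2}$ contribution rather than accumulating a growing tower of higher-order conditional moments. This is the familiar delicate point in deriving Rosenthal-type martingale moment bounds, and the induction has to be arranged so that each intermediate estimate stays of the form already appearing on the right-hand side of the lemma, with the constant $C_q$ growing only polynomially in $q$.
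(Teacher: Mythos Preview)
The paper does not prove this lemma: it is listed among the auxiliary lemmas as a standard martingale moment inequality (the Burkholder--Rosenthal inequality) and is simply stated without argument. Your proposal therefore goes beyond what the paper itself supplies.

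Your outline is a correct route to this classical result. The BDG bound followed by the decomposition $\sum_k X_k^2 = S + R$ with $S$ predictable and $R$ a martingale, and then a dyadic induction on $q$, is one of the standard proofs. The bookkeeping concern you flag is real but resolvable: at the inductive step the term $\E\bigl(\sum_k \E_{k-1}|Y_k|^2\bigr)^{q/4}$ is handled by bounding $\E_{k-1}|Y_k|^2 \leq \E_{k-1}|X_k|^4$, interpolating
\[
\E_{k-1}|X_k|^4 \leq \bigl(\E_{k-1}|X_k|^2\bigr)^{(q-4)/(q-2)}\bigl(\E_{k-1}|X_k|^q\bigr)^{2/(q-2)},
\]
applying H\"older across the sum in $k$, and then using Young's inequality to split the resulting product into $C_q\bigl(\E S^{q/2} + \sum_k \E|X_k|^q\bigr)$. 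No tower of higher-order conditional moments accumulates, and the induction closes cleanly.
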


\begin{lem}
\label{burk2}
Let $\{X_k\}$ be a real martingale difference sequence with respect to the increasing $\sigma$-field $\mathcal{F}_k$, then for $q \geq 2$,
$
\E\left|\sum X_k\right|^q \leq C_q\E\left(\sum |X_k|^2\right)^{q/2}.
$
\end{lem}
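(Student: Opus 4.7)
The plan is to recognize this lemma as the Burkholder--Davis--Gundy inequality restricted to exponents $q \geq 2$, and to derive it from the previously listed Lemma \ref{burk} by controlling the difference between the predictable quadratic variation $\sum \E_{k-1}|X_k|^2$ and the quadratic variation $\sum |X_k|^2$. The case $q=2$ is trivial: orthogonality of martingale differences gives $\E|\sum X_k|^2 = \sum \E|X_k|^2 = \E\sum|X_k|^2$, so the bound holds with $C_2=1$.

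For $q>2$, I would start from Lemma \ref{burk} and reduce each of the two terms on its right hand side to a constant multiple of $\E\bigl(\sum|X_k|^2\bigr)^{q/2}$. The second term $\sum \E|X_k|^q$ is the easier one: since $q/2 \geq 1$, the elementary $\ell^p$ embedding $\sum a_k^{q/2} \leq \bigl(\sum a_k\bigr)^{q/2}$ (valid for nonnegative $a_k$) applied with $a_k = |X_k|^2$ gives $\sum |X_k|^q \leq \bigl(\sum|X_k|^2\bigr)^{q/2}$, and taking expectations yields the desired estimate.

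The main obstacle is to bound the first term $\E\bigl(\sum \E_{k-1}|X_k|^2\bigr)^{q/2}$ by $\E\bigl(\sum |X_k|^2\bigr)^{q/2}$. The strategy is to write $\sum \E_{k-1}|X_k|^2 = \sum |X_k|^2 - N_n$, where $N_n = \sum_k Y_k$ with $Y_k = |X_k|^2 - \E_{k-1}|X_k|^2$ is itself a martingale. The triangle inequality in $L^{q/2}$ then gives
\begin{equation*}
\E\Bigl(\sum \E_{k-1}|X_k|^2\Bigr)^{q/2} \leq 2^{q/2-1}\Bigl[\E\Bigl(\sum|X_k|^2\Bigr)^{q/2} + \E|N_n|^{q/2}\Bigr],
\end{equation*}
reducing the problem to controlling $\E|N_n|^{q/2}$. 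I would close this by induction on $q$: for $q \geq 4$, invoke the inductive hypothesis at exponent $q/2$ to obtain $\E|N_n|^{q/2} \leq C\,\E\bigl(\sum|Y_k|^2\bigr)^{q/4}$, and since $|Y_k|^2 \leq 4|X_k|^4$ we have $\sum|Y_k|^2 \leq 4\bigl(\sum|X_k|^2\bigr)^2$, which gives the required bound. For the base range $2<q<4$ (so $q/2 \in (1,2)$), apply Jensen's inequality and the $q=2$ identity $\E|N_n|^2 = \sum\E|Y_k|^2 \leq 4\sum\E|X_k|^4$, then chain with the $\ell^p$ embedding above. Closing the induction yields $\E|\sum X_k|^q \leq C_q\,\E\bigl(\sum|X_k|^2\bigr)^{q/2}$ as required.
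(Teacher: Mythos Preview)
The paper does not prove Lemma~\ref{burk2}; like Lemma~\ref{burk}, it is quoted as a standard martingale inequality (the upper Burkholder--Davis--Gundy bound for $q\ge2$). Your derivation from Lemma~\ref{burk} has a genuine gap in the inductive step $q\ge4$: the pointwise bound $|Y_k|^2\le4|X_k|^4$ with $Y_k=|X_k|^2-\E_{k-1}|X_k|^2$ is false. For a concrete counterexample take $X_1=0$ with probability $1-p$ and $X_1=\pm a$ each with probability $p/2$; on the event $\{X_1=0\}$ one has $|Y_1|^2=p^2a^4$ while $4|X_1|^4=0$. The conditional mean $\E_{k-1}|X_k|^2$ is simply not dominated pathwise by $|X_k|^2$, so the chain $\sum|Y_k|^2\le4\sum|X_k|^4\le4\bigl(\sum|X_k|^2\bigr)^2$ breaks at its first link.

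The base range $2<q<4$ has the same defect in a different guise: Jensen gives $\E|N_n|^{q/2}\le(\E|N_n|^2)^{q/4}\le C\bigl(\E(\sum|X_k|^2)^2\bigr)^{q/4}$, but for $q/2<2$ H\"older runs the wrong way, so you cannot pass from $\bigl(\E S^2\bigr)^{q/4}$ down to $\E S^{q/2}$. Your overall plan---reduce everything to showing $\E\bigl(\sum\E_{k-1}|X_k|^2\bigr)^{q/2}\le C_q\,\E\bigl(\sum|X_k|^2\bigr)^{q/2}$---is in fact viable, but it needs a different engine: since $T=\sum\E_{k-1}|X_k|^2$ is the predictable compensator of $S=\sum|X_k|^2$, one has $\E[T_\infty-T_\tau\mid\mathcal{F}_\tau]\le\E[S_\infty\mid\mathcal{F}_\tau]$ for every stopping time $\tau$, and the Garsia--Neveu lemma then yields $\E T^{p}\le p^{p}\,\E S^{p}$ for all $p\ge1$. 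Taking $p=q/2$ and combining with your (correct) bound $\sum\E|X_k|^q\le\E S^{q/2}$ closes the argument.
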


\end{document}